\allowdisplaybreaks \setlength{\textwidth}{6.5in}
\numberwithin{equation}{section}
\newtheorem{theorem}{Theorem}[section]
\newtheorem{prop}[theorem]{Proposition}
\newtheorem{lemma}[theorem]{Lemma}
\renewcommand{\theenumi}{\roman{enumi}}
\newcommand\Item[1][]{%
  \ifx\relax#1\relax  \item \else \item[#1] \fi
  \abovedisplayskip=0pt\abovedisplayshortskip=0pt~\vspace*{-\baselineskip}}
\theoremstyle{definition}
\theoremstyle{remark}
\newtheorem{rem}[theorem]{Remark}
\theoremstyle{conjecture}
\DeclareMathOperator{\Prob}{\mathbf{P}}
\DeclareMathOperator{\tv}{TV}
\DeclareMathOperator{\bin}{Bin}
\DeclareMathOperator{\poi}{Poisson}
\title[Uniqueness of RSBM communities]{Uniqueness of communities in regular stochastic block models}
\date{}
\author{Sayar Karmakar, Moumanti Podder}
\address{Sayar Karmakar, Department of Statistics, University of Florida, Gainesville, FL 32611, United States.}
\email{sayarkarmakar@ufl.edu}
\address{Moumanti Podder, Department of Mathematics, Indian Institute of Science Education and Research (IISER) Pune, Dr. Homi Bhabha Road, Pashan, Pune 411008, India.}
\email{moumantip3@gmail.com}
\begin{document}
\bibliographystyle{plainnat}

\begin{abstract}
This paper studies the regular stochastic block model comprising \emph{several} communities: each of the $k$ non-overlapping communities, for $k \geqslant 3$, possesses $n$ vertices, each of which has total degree $d$. The values of the intra-cluster degrees (i.e.\ the number of neighbours of a vertex inside the cluster it belongs to) and the inter-cluster degrees (i.e.\ the number of neighbours of a vertex inside a cluster different from its own) are allowed to vary across clusters. We discuss two main results: the first compares the probability measure induced by our model with the uniform measure on the space of $d$-regular graphs on $kn$ vertices, and the second establishes that the clusters, under rather weak assumptions, are unique asymptotically almost surely as $n \rightarrow \infty$.  
\end{abstract}

\subjclass[2020]{60C05, 94C15, 68R05, 68R01}

\keywords{regular stochastic block model, uniqueness of clusters, configuration models}

\thanks{The second author was partially supported by the grant NSF DMS-1444084.}

\maketitle

\section{Introduction}\label{intro}
\sloppy This paper concerns itself with the \emph{regular stochastic block model}, henceforth abbreviated as RSBM, that is used to study \emph{clustered networks}. These networks exhibit \emph{community structure}, whereby the individuals participating in the network, typically indicated as nodes or vertices of a graph, are split into overlapping or non-overlapping groups, usually with dense connections internally and sparser connections between different groups. Community structure is common in many complex networks such as computer and information networks (\cite{statistical_SBM}), online social networks and biological networks (\cite{girvan_newman, choudhury_paul, fani_bagheri, moosavi}) that include protein-protein and gene-gene interactions (\cite{genetic}), biological neural networks (\cite{brain}), metabolic networks (\cite{metabolic}) etc. Detecting communities in clustered networks has been pursued with fervour (\cite{newman_1, newman_fast, newman_2, fast_unfolding, nadakuditi_newman, karrer_newman}), since communities often act as meta-nodes in a network and individuals within the same community tend to exhibit behavioural and functional similarities, simplifying the analysis of the underlying features of the network. The characteristics displayed by each distinct community may also vary greatly from the average properties of the network. The existence of communities may also significantly affect the spreading of rumours, epidemics etc.\ within the network.

The \emph{stochastic block model}, henceforth abbreviated as SBM, (introduced in \cite{holland}, surveyed in \cite{survey}), has been the most popular model, so far, in studying clustered networks. In its most simplified form, this model comprises $2n$ vertices that are partitioned into two equi-sized clusters. Edges between all pairs of vertices appear mutually independently, with probability $p$ if both vertices belong to the same cluster, and probability $q$ if they belong to different clusters. Letting the intra-cluster average degree be $a \sim p n$ and the inter-cluster average degree be $b \sim q n$, \cite{mcsherry} and \cite{exact} studied the SBM in the regime where $a, b = O(\log n)$, whereas \cite{coja}, \cite{decelle_1}, \cite{decelle_2}, \cite{mossel_1}, \cite{mossel_2}, \cite{mossel_3}, \cite{mossel_4} and \cite{massoulie_ramanujan} studied SBM in the regime where $a, b = O(1)$, as $n$ grows to $\infty$. Other variants of this model studied are the Bayesian SBM (\cite{peixoto}), degree-corrected block models (\cite{yan}), labeled SBM (\cite{labeled_1, labeled_2}), SBM in sparse hypergraphs (\cite{pal_zhu}) etc. We also refer the reader to \cite{dyer_frieze}, \cite{jerrum_sorkin} and \cite{condon} for discussions on relations between community detection in SBM and the minimum bisection problem that seeks to partition a graph of $2n$ vertices into two equi-sized parts such that the number of edges across the parts is minimized. 

Since the essence of our paper is to focus on the case where the given model constitutes \emph{several} underlying clusters, we emphasize on the following developments in the literature. In \cite{decelle_1}, it was conjectured that if the signal-to-noise ratio of a given SBM is strictly higher than $1$, then it is possible to detect communities in polynomial time, or, in other words, the well-known Kesten-Stigum threshold is achieved; moreover, if the number of underlying communities in the model exceeds $4$, it is possible to detect the communities information-theoretically for some signal-to-noise ratio strictly lower than $1$. It was shown in \cite{bordenave} that the Kesten-Stigum threshold is achieved in SBM's with multiple communities satisfying certain asymmetry assumptions, whereas the full conjecture of \cite{decelle_1}, for several clusters, was established in \cite{abbe_sandon}. The extension of SBM from two to several communities has proven to be a veritable challenge during the course of development of this field.

The RSBM was introduced in \cite{09} (note that two regular versions of the SBM in the sparse regime was proposed in \cite{mossel_1}, and \emph{equitable random graphs} have been studied in \cite{newman_martin} and \cite{barucca}). As in \cite{09}, we assume that each intra-cluster degree and each inter-cluster degree exceeds 3, ensuring that the resulting graph is connected with high probability. The RSBM differs from the SBM with constant average intra-cluster and inter-cluster degrees in that the latter has a positive probability of possessing isolated vertices. In RSBM, the imposition of the constraint that each vertex has a constant number of neighbours in each given cluster gives more structure to the graph, but at the same time, robs the model of the edge-independence that is present in SBM. 

We now highlight the novelties as well as describe the organization of our paper. We emphasize that our model is more general than that of \cite{09} in that it takes into account multiple communities as well as intra-cluster and inter-cluster degree values that differ across communities. We answer questions similar to those in \cite{09}, but the proof techniques are more involved and require careful analysis. To set the stage, in \S\ref{notation}, we describe the notations and terminology used throughout the paper; in \S\ref{model_defn}, we describe the model and its underlying measure in details; in \S\ref{configuration:subsec}, we describe the well-known configuration model and the associated exploration process, and their importance in the generation of uniformly random regular or bipartite-regular graphs. 

In \S\ref{sec:measure_diff}, we show that the measure induced by RSBM on $k n$ vertices, each with degree $d$, where $k$ denotes the number of communities and $n$ the number of members in each community, is distinct from the measure that makes a uniformly random selection out of the collection of all $d$-regular graphs on $k n$ vertices. In \S\ref{sec:unique}, we show that under rather weak assumptions, the underlying clusters of the model are unique almost surely as $n$ approaches $\infty$. We draw attention of the reader to a key difference between our analysis and the analysis in [\cite{09}, \S 3.2.2]: while they had the symmetry, around $1/2$, of the binary entropy function $H(\alpha) = -\alpha \log_{2} \alpha - (1-\alpha) \log_{2} (1-\alpha)$ in their favour due to the presence of only two clusters, we require a somewhat different strategy to handle the higher number of clusters in our model. Even in the homogeneous case, where all the intra-cluster degrees are the same, there is need for a thorough case-by-case analysis that is much more intricate than in \cite{09}. We emphasize here that in the homogeneous case, our analysis allows for the inter-cluster degrees to exceed, by far, the intra-cluster degrees. This is a significant generalization over the much more usual assumption of denser intra-cluster connections and sparser inter-cluster connections found in the literature. In the heterogeneous scenario, we need a more restricted range of intra-cluster and inter-cluster degrees, as described in [\eqref{not_all_intra_degrees_same_cond'} and \eqref{not_all_intra_degrees_same_cond}, Theorem~\ref{main_2}]. 

\subsection{Notations}\label{notation} Given $n, d \in \mathbb{N}$, we denote by $\mathcal{R}^{n}_{d}$ the set of all $d$-regular graphs on $n$ labeled vertices, and by $\mathcal{B}^{n}_{d}$ the set of all $d$-bipartite-regular graphs on $2n$ labeled vertices where each cluster comprises $n$ vertices. We shall denote by $\mu^{n}_{d}$ the uniform measure on $\mathcal{R}^{n}_{d}$. 

Given a graph $G$, we denote by $V(G)$ its vertex set and by $E(G)$ its edge set. Given $S \subset V(G)$ and $v \in V(G)$, we let $\deg_{S}(v)$ denote the number of edges $\{u, v\}$ where $u \in S$. We denote by $G|_{S}$ the subgraph of $G$ that is induced on $S$. For disjoint subsets $S_{1}$ and $S_{2}$ of $V(G)$, we denote by $\deg(S_{1}, S_{2})$ the number of edges $\{u_{1}, u_{2}\}$ where $u_{1} \in S_{1}$ and $u_{2} \in S_{2}$. We let $G|_{S_{1}, S_{2}}$ denote the subgraph with vertex set $S_{1} \cup S_{2}$ and edge set $\left\{\{u_{1}, u_{2}\}: u_{1} \in S_{1}, u_{2} \in S_{2}\right\}$. Given $v \in V(G)$ and $r \in \mathbb{N}$, we let $B(v, r) = \left\{u \in V(G): \rho(u, v) \leqslant r\right\}$ be the neighbourhood of radius $r$ around $v$, where $\rho$ is the usual graph metric. We let $\delta B(v, r) = \left\{u \in V(G): \rho(u, v) = r\right\}$ denote the boundary of $B(v, r)$. 

Given an infinite sequence of graphs $\left\{G_{n}\right\}$ and a graph property $A$, we say that $A$ holds \emph{asymptotically almost surely} (a.a.s.)\ for this sequence if $\Prob[G_{n} \text{ satisfies property } A] \rightarrow 1$ as $n \rightarrow \infty$. Given any two functions $f: \mathbb{N} \rightarrow \mathbb{R}^{+}$ and $g: \mathbb{N} \rightarrow \mathbb{R}^{+}$, we write $f(n) \sim g(n)$ to indicate that $f(n)/g(n) \rightarrow 1$ as $n \rightarrow \infty$. 

For any $\alpha \in (0,1)$, recall that the Shannon entropy for a Bernoulli$(\alpha)$ distribution is given by $H(\alpha) = -\alpha \log_{2} \alpha - (1-\alpha) \log_{2} (1-\alpha)$. This will be used in \S\ref{sec:unique}.

\subsection{Decription of the model}\label{model_defn} Our $k$-cluster RSBM, denoted $\mathcal{G}^{n}_{A}$, has the following parameters: 
\begin{enumerate}
\item $n$ denotes the number of vertices in each cluster, 
\item $k$ denotes the number of clusters, 
\item and $A = \left(A_{i,j}\right)_{1 \leqslant i, j \leqslant k}$ is a $k \times k$ symmetric matrix of strictly positive integers such that, for some $d \in \mathbb{N}$,
\begin{equation}\label{degree_criterion}
\sum_{j=1}^{k} a_{i,j} = d \text{ for all } i = 1, \ldots, k.
\end{equation} 
\end{enumerate}
Starting with $k n$ labeled vertices, we uniformly randomly partition them into $k$ clusters $\mathcal{C}_{1}, \ldots, \mathcal{C}_{k}$, each of size $n$. Independent of each other, we now place on the vertices of $\mathcal{C}_{i}$ a uniformly random member of $\mathcal{R}^{n}_{a_{i,i}}$, and across the clusters $\mathcal{C}_{i}$ and $\mathcal{C}_{j}$ a uniformly random member of $\mathcal{B}^{n}_{a_{i,j}}$, for all $i, j \in \{1, \ldots, k\}$ with $i \neq j$. The criterion in \eqref{degree_criterion} ensures that any realization of our model will be $d$-regular. All the parameters except $n$ remain fixed throughout our analysis. We analyze the asymptotic behaviour of the model as $n \rightarrow \infty$.

\subsection{Configuration model and exploration process}\label{configuration:subsec} The \emph{configuration model} plays a crucial role as a tool in our arguments in \S\ref{sec:unique}. Given $d, n \in \mathbb{N}$ such that $d n$ is even, this model (see \cite{07, 08}) allows us to generate a $d$-regular random graph on $n$ labeled vertices $v_{1}, \ldots, v_{n}$ (possibly with self-loops and parallel edges) according to the following procedure, also known as the \emph{exploration process}:
\renewcommand{\theenumi}{\roman{enumi}}
\begin{enumerate}
\item Fix a total order $v_{1} < v_{2} < \ldots < v_{n}$ on the vertex set, and let $\Xi_{i} = \left\{\xi_{i,j}: 1 \leqslant j \leqslant d\right\}$ denote the set of \emph{half-edges} emanating from $v_{i}$, for all $i = 1, \ldots, n$. Let $\Xi = \bigcup_{i=1}^{n} \Xi_{i}$. We define a total ordering on $\Xi$ as follows: all half-edges in $\Xi_{i}$ come before every half-edge in $\Xi_{i+1}$ for all $i = 1, \ldots, n-1$, and within each $\Xi_{i}$, we have $\xi_{i,j} < \xi_{i,j+1}$ for all $1 \leqslant j \leqslant d-1$.
\item We first choose $\hat{\xi}$ uniformly randomly from the set $\Xi \setminus \{\xi_{1,1}\}$ and form the edge $\left\{\hat{\xi}, \xi_{1,1}\right\}$. Having constructed the first $k$ edges, we find the smallest half-edge $\xi_{i,j}$ yet unmatched with another half-edge, and choosing a $\tilde{\xi}$ uniformly randomly from the remaining subset of half-edges, we form the edge $\left\{\xi_{i,j}, \tilde{\xi}\right\}$. Thus we form a perfect matching on $\Xi$. 
\end{enumerate}

We also describe here the exploration process aimed at generating a random $d$-regular bipartite graph in which each cluster contains $n$ vertices. In this case, we label the vertices of one partition as $u_{1} < u_{2} < \cdots < u_{n}$ and the other as $v_{1} < v_{2} < \cdots < v_{n}$. We let $\Xi_{i} = \left\{\xi_{i,j}: 1 \leqslant j \leqslant d\right\}$ denote the set of half-edges emanating from $u_{i}$ and $\Gamma_{i} = \left\{\gamma_{i,j}: 1 \leqslant j \leqslant n\right\}$ the set of half-edges emanating from $v_{i}$, and the total orderings on $\Xi = \bigcup_{i=1}^{n} \Xi_{i}$ and on $\Gamma = \bigcup_{i=1}^{n} \Gamma_{i}$ are analogous to the one described above. We first choose, uniformly randomly, a $\hat{\gamma}$ out of $\Gamma$, and form the edge $\{\hat{\gamma}, \xi_{1,1}\}$. After having constructed the $k$-th edge, we find the smallest $\xi_{i,j}$ in $\Xi$ that is yet to be matched with a half-edge from $\Gamma$. We choose, uniformly randomly, a half-edge $\tilde{\gamma}$ from $\Gamma$ that has not yet been matched, and form the edge $\{\tilde{\gamma}, \xi_{i,j}\}$. This leads to a perfect matching between $\Xi$ and $\Gamma$. 

It has been shown in \cite{08} that in either of the cases above, the probability that the generated random graph is simple, i.e.\ devoid of self-loops and parallel edges, stays bounded away from $0$ as $d$ stays bounded and $n \rightarrow \infty$. We can thus condition on the event that the generated random graph is simple, which in turn allows us to prove the results in \S\ref{sec:unique} using the exploration process. 

Henceforth, we call a half-edge emanating from a vertex in $\mathcal{C}_{i}$ and matched with a half-edge from a vertex in $\mathcal{C}_{j}$, a \emph{half-edge of type $\{i, j\}$}, for all $i, j \in \{1, \ldots, k\}$. 

\section{Comparing RSBM with uniform measure on $d$-regular graphs on $kn$ vertices}\label{sec:measure_diff}
We state here our first main result. Let $\mu^{n}_{A}$ denote the probability measure of $\mathcal{G}_{n}^{A}$ and $\mathcal{S}^{n}_{A}$ the support of $\mu^{n}_{A}$. Recall from \S\ref{notation} that $\mu^{kn}_{d}$ denotes the uniform random measure on $\mathcal{R}^{kn}_{d}$. 

\begin{theorem}\label{main_1}
Under the above set-up, keeping the matrix $A$ fixed, we have
\begin{equation}
\lim_{n \rightarrow \infty} \left|\left|\mu^{n}_{A}, \mu^{kn}_{d}\right|\right|_{\tv} = 1,
\end{equation}
where $\tv$ denotes the total variation distance between two probability measures.
\end{theorem}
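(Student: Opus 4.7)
The plan is to identify a single event that has full $\mu^{n}_{A}$-measure but vanishing $\mu^{kn}_{d}$-measure, which, via the standard inequality $\|\mu^{n}_{A} - \mu^{kn}_{d}\|_{\tv} \geq \mu^{n}_{A}(E) - \mu^{kn}_{d}(E)$, forces the total-variation distance to $1$. The natural candidate is $E = \mathcal{S}^{n}_{A}$: clearly $\mu^{n}_{A}(\mathcal{S}^{n}_{A}) = 1$, so the theorem reduces to establishing
\[
\mu^{kn}_{d}(\mathcal{S}^{n}_{A}) \;=\; \frac{|\mathcal{S}^{n}_{A}|}{|\mathcal{R}^{kn}_{d}|} \;\longrightarrow\; 0 \text{ as } n \to \infty.
\]

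To bound the numerator I would count \emph{decorated} graphs — pairs consisting of a graph in $\mathcal{S}^{n}_{A}$ together with an ordered partition of the $kn$ labeled vertices that witnesses its membership in the support. Since every $G \in \mathcal{S}^{n}_{A}$ admits at least one such partition,
\[
|\mathcal{S}^{n}_{A}| \;\leq\; \frac{(kn)!}{(n!)^{k}} \cdot \prod_{i=1}^{k} |\mathcal{R}^{n}_{a_{i,i}}| \cdot \prod_{1 \leq i < j \leq k} |\mathcal{B}^{n}_{a_{i,j}}|.
\]
The denominator $|\mathcal{R}^{kn}_{d}|$ and each factor on the right admit the precise Bender--Canfield--McKay asymptotics for fixed $d$ (as $n \to \infty$),
\[
|\mathcal{R}^{n}_{d}| \;\sim\; e^{-(d^{2}-1)/4} \cdot \frac{(nd)!}{(nd/2)! \, 2^{nd/2} \, (d!)^{n}}, \qquad |\mathcal{B}^{n}_{d}| \;\sim\; e^{-(d-1)^{2}/2} \cdot \frac{(nd)!}{(d!)^{2n}}.
\]
Substituting these into the upper bound and applying Stirling's formula to every factorial, the $\log n$ coefficients cancel exactly (a direct check using $\sum_{j} a_{i,j} = d$), leaving a linear-in-$n$ comparison
\[
\log \frac{|\mathcal{S}^{n}_{A}|}{|\mathcal{R}^{kn}_{d}|} \;\leq\; F(A,d) \cdot n + O(\log n),
\]
where $F(A,d)$ is an explicit algebraic expression in the entries of $A$ and in $d$.

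The heart of the argument, and where I expect the main technical obstacle, is verifying $F(A,d) < 0$ for every admissible $A$. Intuitively: under $\mu^{kn}_{d}$ conditioned on any fixed equal-partition of $kn$ vertices into parts of size $n$, the cluster-degree vector $(\deg_{\mathcal{C}_{1}}(v), \ldots, \deg_{\mathcal{C}_{k}}(v))$ of a vertex $v$ is approximately multinomial with parameters $(d; 1/k, \ldots, 1/k)$, so requiring that \emph{every} vertex attain the exact vector dictated by the row of $A$ matching its cluster is an exponentially rare event, and the gain of $\binom{kn}{n,\ldots,n} \sim k^{kn}$ choices of partition is too modest to compensate. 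To make this precise I would recast $F(A,d)$ as a Kullback--Leibler-type functional comparing the type distribution prescribed by $A$ with the reference multinomial arising from the unconstrained configuration model, and use Jensen/convexity to extract strict negativity (under the standing assumption $a_{i,j} \geq 3$, which also ensures that the Bender--Canfield approximations apply uniformly and that the simple-graph conditioning in the configuration model contributes only a constant factor). Once $F(A,d) < 0$ is in hand, $\mu^{kn}_{d}(\mathcal{S}^{n}_{A}) \leq e^{-|F(A,d)|n + o(n)} \to 0$, and the theorem follows.
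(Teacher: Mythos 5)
Your framework is the same as the paper's: take $E=\mathcal{S}^{n}_{A}$, bound $|\mathcal{S}^{n}_{A}|$ above by the decorated count $\frac{(kn)!}{(n!)^{k}}\prod_{i}|\mathcal{R}^{n}_{a_{i,i}}|\prod_{i<j}|\mathcal{B}^{n}_{a_{i,j}}|$, divide by $|\mathcal{R}^{kn}_{d}|$, and reduce via Stirling to the sign of the linear-in-$n$ exponent. Up to that point everything you write is correct and matches the paper's computation.

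The genuine gap is in the step you yourself call the heart of the argument. Carrying the Stirling computation through, the $n$-coefficient is
\[
F(A,d)=k\log k+\frac{1}{2}\sum_{i,j}a_{i,j}\log a_{i,j}-\sum_{i,j}\log\left(a_{i,j}!\right)+k\log(d!)-\frac{kd}{2}\log(kd)
\]
(sums over ordered pairs), and when all $a_{i,j}=d/k$ this collapses to $\frac{3k}{2}\log k-\frac{k(k-1)}{2}\log\left(2\pi d/k\right)$. In that symmetric case the type prescribed by each row of $A$ is exactly the mean of the reference multinomial, so the Kullback--Leibler functional you propose is identically zero, and no Jensen/convexity argument built on it can deliver strict negativity. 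The actual source of the decay is the local-limit correction: even at its mode, a $\mathrm{Multinomial}(d;1/k,\dots,1/k)$ vector is attained with probability only $\Theta\bigl(d^{-(k-1)/2}\bigr)$, and it is these per-vertex polynomial factors (the $\sqrt{2\pi}$'s in Stirling) that must be shown to beat the $k^{kn}$ partition entropy. The paper isolates exactly this via the weighted GM--HM inequality \eqref{gm_hm}, reducing the whole comparison to $(2\pi)^{k-1}>k^{3}$ in \eqref{eq_1}; that comparison is the one place where the standing hypothesis $k\geqslant 3$ enters. Your sketch neither invokes $k\geqslant 3$ nor identifies this mechanism, so as written the plan for proving $F(A,d)<0$ does not close.
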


The proof begins with stating two well-known results. For given $n$ and $d$ with $1 \leqslant d = o\left(n^{1/2}\right)$, [\cite{01}, Corollary 5.3] states that
\begin{equation}\label{regular_graph_count}
\left|\mathcal{R}^{n}_{d}\right| = C \frac{(nd)!}{(nd/2)! 2^{nd/2}(d!)^{n}},
\end{equation}
where $C = C(n, d)$ remains bounded as $n$ grows. Similarly, [\cite{02}, Theorem 2] states that 
\begin{equation}\label{bipartite_regular_graph_count}
\left|\mathcal{B}^{n}_{d}\right| = C' \frac{(dn)!}{(d!)^{2n}},
\end{equation}
where $C' = C'(n,d)$ remains bounded as $n$ grows. We now use these estimates to count the total number of possible realizations of $\mathcal{G}^{n}_{A}$. 
Given $kn$ labeled vertices, we choose the vertex sets for the clusters $\mathcal{C}_{1}, \ldots, \mathcal{C}_{k}$ in 
\begin{align}
{kn \choose n} {(k-1)n \choose n} \cdots {2n \choose n} {n \choose n} &= \frac{(kn)!}{\left(n!\right)^{k}} \sim \frac{\sqrt{2 \pi (k n)} \left(\frac{k n}{e}\right)^{k n}}{\left\{\sqrt{2 \pi n} \left(\frac{n}{e}\right)^{n}\right\}^{k}} = \sqrt{\frac{k}{\left(2 \pi n\right)^{k-1}}} k^{kn} = \Theta\left(\frac{k^{kn}}{n^{(k-1)/2}}\right) \nonumber
\end{align}
many ways. The number of possible $a_{i,i}$-regular graphs on $\mathcal{C}_{i}$, for each $i = 1, \ldots, k$, equals, by \eqref{regular_graph_count},
\begin{align}
\Theta\left(\frac{(n a_{i,i})!}{(n a_{i,i}/2)! 2^{n a_{i,i}/2}(a_{i,i}!)^{n}}\right) &= \Theta\left(\frac{\sqrt{2 \pi (a_{i,i} n)} \left(\frac{a_{i,i} n}{e}\right)^{a_{i,i} n}}{\sqrt{\pi (a_{i,i} n)} \left(\frac{a_{i,i} n}{2 e}\right)^{a_{i,i} n/2} 2^{n a_{i,i}/2} \left\{\sqrt{2 \pi a_{i,i}} \left(\frac{a_{i,i}}{e}\right)^{a_{i,i}}\right\}^{n}}\right) \nonumber\\
&= \Theta\left(\frac{a_{i,i}^{a_{i,i} n} n^{a_{i,i} n} e^{-a_{i,i} n}}{a_{i,i}^{a_{i,i} n/2} n^{a_{i,i} n/2} 2^{-a_{i,i}n/2} e^{-a_{i,i}n/2} 2^{a_{i,i} n/2} \left(2 \pi a_{i,i}\right)^{n/2} a_{i,i}^{a_{i,i} n} e^{-a_{i,i} n}}\right) \nonumber\\
&= \Theta\left(\frac{n^{a_{i,i} n/2} e^{a_{i,i}n/2} }{a_{i,i}^{(a_{i,i}+1)n/2} (2\pi)^{n/2}}\right). \nonumber
\end{align}
Similarly, by \eqref{bipartite_regular_graph_count}, the number of possible $a_{i,j}$-bipartite-regular graphs across clusters $\mathcal{C}_{i}$ and $\mathcal{C}_{j}$, for each $i \neq j$ and $i, j \in \{1, \ldots, k\}$, is given by
\begin{align}
\Theta\left(\frac{(a_{i,j} n)!}{(a_{i,j}!)^{2n}}\right) &= \Theta\left(\frac{\sqrt{2 \pi (a_{i,j} n)} \left(\frac{a_{i,j} n}{e}\right)^{a_{i,j} n}}{\left\{\sqrt{2 \pi a_{i,j}} \left(\frac{a_{i,j}}{e}\right)^{a_{i,j}}\right\}^{2n}}\right) = \Theta\left(\frac{\sqrt{n} a_{i,j}^{a_{i,j} n} n^{a_{i,j} n} e^{-a_{i,j} n}}{\left(2 \pi a_{i,j}\right)^{n} a_{i,j}^{2 a_{i,j} n} e^{-2 a_{i,j} n}}\right) = \Theta\left(\frac{\sqrt{n} n^{a_{i,j} n} e^{a_{i,j} n}}{(2 \pi)^{n} a_{i,j}^{(a_{i,j}+1)n}}\right). \nonumber
\end{align}

Therefore, combining these estimates, the total number of possible realizations of $\mathcal{G}^{n}_{A}$ on a given set of $k n$ labeled vertices becomes
\begin{align}\label{RSBM_model_count_estimate}
& \Theta\left(\frac{k^{kn}}{n^{(k-1)/2}} \prod_{i=1}^{k} \frac{n^{a_{i,i} n/2} e^{a_{i,i}n/2}}{a_{i,i}^{(a_{i,i}+1)n/2} (2\pi)^{n/2}} \prod_{i < j} \frac{\sqrt{n} n^{a_{i,j} n} e^{a_{i,j} n}}{(2 \pi)^{n} a_{i,j}^{(a_{i,j}+1)n}}\right) \nonumber\\
&= \Theta\left(\frac{k^{k n} n^{\sum_{i=1}^{k} n/2 \sum_{j=1}^{k} a_{i,j}} e^{\sum_{i=1}^{k} n/2 \sum_{j=1}^{k} a_{i,j}} n^{k(k-1)/4}}{n^{(k-1)/2} \prod_{i, j} a_{i,j}^{(a_{i,j}+1)n/2} (2\pi)^{nk/2} (2 \pi)^{k (k-1) n/2}}\right) = \Theta\left(\frac{k^{k n} n^{k n d/2} e^{k n d/2} n^{(k^{2} - 3k + 2)/4}}{\prod_{i, j} a_{i,j}^{(a_{i,j}+1)n/2} (2\pi)^{nk^{2}/2}}\right).
\end{align}
On the other hand, from \eqref{regular_graph_count}, the number of $d$-regular graphs on $kn$ vertices is 
\begin{align}\label{uniform_regular_count_estimate}
\Theta\left(\frac{(k n d)!}{(k n d/2)! 2^{k n d/2} (d!)^{k n}}\right) &= \Theta\left(\frac{\sqrt{2 \pi (k n d)} \left(\frac{k n d}{e}\right)^{k n d}}{\sqrt{2 \pi (k n d/2)} \left(\frac{k n d}{2 e}\right)^{k n d/2} 2^{k n d/2} \left(\sqrt{2 \pi d} \left(\frac{d}{e}\right)^{d}\right)^{k n}}\right) 
= \Theta\left(\frac{(k n d)^{k n d/2} e^{k n d/2}}{(2 \pi)^{k n/2} d^{(d+1/2)k n}}\right).
\end{align}

We recall here the well-known weighted geometric mean -- harmonic mean inequality. Given $m \in \mathbb{N}$ and positive reals $x_{i}$ and $\alpha_{i} $ for all $i \in \{1, \ldots, m\}$ with $\sum_{i=1}^{m} \alpha_{i} = 1$, we have
\begin{equation}\label{gm_hm}
\prod_{i=1}^{m} x_{i}^{\alpha_{i}} \geqslant \left(\sum_{i=1}^{m} \frac{\alpha_{i}}{x_{i}}\right)^{-1}.
\end{equation}
Setting $x_{j} = a_{i,j}$ and $\alpha_{j} = \frac{a_{i,j}}{d}$ for all $j \in \{1, \ldots, k\}$, from \eqref{gm_hm}, we get
\begin{equation}
\prod_{j=1}^{k} a_{i,j}^{a_{i,j}/d} \geqslant \frac{d}{k} \quad \implies \quad \prod_{j=1}^{k} a_{i,j}^{a_{i,j}} \geqslant \frac{d^{d}}{k^{d}} \nonumber
\end{equation}
for each $i = 1, \ldots, k$. This in turn yields
\begin{equation}\label{gm_hm_combined}
\left\{\prod_{i, j} a_{i,j}^{a_{i,j}}\right\}^{n/2} \geqslant \frac{d^{k n d/2}}{k^{k n d/2}}.
\end{equation}
We also observe that given positive integers $x_{1}, \ldots, x_{k}$ for any $k \in \mathbb{N}$, the following inequality holds:
\begin{equation}\label{product_sum}
k \prod_{i=1}^{k} x_{i} \geqslant \sum_{i=1}^{k} x_{i}.
\end{equation}

From \eqref{RSBM_model_count_estimate}, \eqref{uniform_regular_count_estimate}, \eqref{gm_hm_combined} and \eqref{product_sum}, we see that
\begin{align}\label{eq_1}
\mu^{kn}_{d}\left(\mathcal{S}^{n}_{A}\right) &\leqslant \Theta\left(\frac{k^{k n} n^{k n d/2} e^{k n d/2} n^{(k^{2} - 3k + 2)/4}}{\prod_{i, j} a_{i,j}^{(a_{i,j}+1)n/2} (2\pi)^{nk^{2}/2}} \cdot \frac{(2 \pi)^{k n/2} d^{(d+1/2)k n}}{(k n d)^{k n d/2} e^{k n d/2}}\right) \nonumber\\
&\leqslant \Theta\left(\frac{k^{k n} n^{(k^{2}-3k+2)/4} d^{k n d/2} d^{k n/2}}{\left\{\prod_{i, j} a_{i,j}^{a_{i,j}}\right\}^{n/2} \left\{\prod_{i, j \in [k]} a_{i,j}\right\}^{n/2} (2 \pi)^{k(k-1)n/2} k^{k n d/2}}\right) \nonumber\\
&\leqslant \Theta\left(\frac{k^{k n} n^{(k^{2}-3k+2)/4} d^{k n/2}}{\left\{\prod_{i, j} a_{i,j}\right\}^{n/2} (2 \pi)^{k(k-1)n/2}}\right)
= \Theta\left(\left\{\frac{k^{3}}{(2 \pi)^{k-1}}\right\}^{k n/2} \cdot n^{(k^{2}-3k+2)/4}\right).
\end{align} 
The ratio $\frac{k^{3}}{(2 \pi)^{k-1}}$ is strictly less than $1$ for all $k$ that satisfy $\frac{k-1}{\log k} > \frac{3}{\log (2\pi)}$. Now, the function $f(x) = \frac{x-1}{\log x}$ is strictly increasing in $x$ for all $x \geqslant 2$, and $f(3) = \frac{2}{\log 3} > \frac{3}{\log (2\pi)}$. This shows that the ratio $\frac{k^{3}}{(2 \pi)^{k-1}}$ is strictly less than $1$ for all $k \geqslant 3$, thus showing that the bound in \eqref{eq_1} is $o(1)$.


\section{Almost sure uniqueness of clusters in RSBM}\label{sec:unique}
This section is devoted to establishing fairly general sufficient conditions under which the communities involved in the regular stochastic block model are unique asymptotically almost surely as the size of each community approaches infinity. This is what constitutes our second main result. 
\begin{theorem}\label{main_2}
Suppose the model described in \S\ref{model_defn} satisfies the following conditions:
\begin{enumerate}
\item \label{intra_degrees_same} There exists a positive constant $C$ such that $C a_{i,i} > B_{i}$ for each $i \in [k]$, where $B_{i} = \max\left\{a_{i,\ell}: \ell \in \{1, \ldots, k\} \setminus \{i\}\right\}$.
\item When not all intra-cluster degrees are equal, for $i, j \in \{1, \ldots, k\}$ with $a_{i,i} < a_{j,j}$, there exist constants $\delta_{i,j} \in \left(0, \frac{1}{4}\right)$, independent of all entries of $A$, such that  
\begin{equation}\label{not_all_intra_degrees_same_cond'}
a_{i,i} \geqslant \left(\frac{1}{2} + 2\delta_{i,j}\right)a_{j,j}.
\end{equation}
Moreover, there exist constants $\epsilon_{i,j} \in \left(0, \frac{1}{4}\right)$, independent of the entries of $A$, such that for all $i, j \in \{1, \ldots, k\}$ with $a_{i,i} > a_{j,j}$ and $B_{j} > a_{i,i} - a_{j,j}$,
\begin{equation}\label{not_all_intra_degrees_same_cond}
\left(\frac{1}{2} - 2\epsilon_{i,j}\right)a_{i,i} \geqslant B_{j}.
\end{equation}  
\end{enumerate}
Then, for all sufficiently large values of the entries of the matrix $A$, the clusters $\mathcal{C}_{i}$, $i = 1, \ldots, k$, are a.a.s.\ unique as $n \rightarrow \infty$ while the matrix $A$ stays fixed. 
\end{theorem}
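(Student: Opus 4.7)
The plan is to carry out a first-moment computation over ``alternative'' cluster assignments. Call a partition $(\mathcal{C}'_{1},\ldots,\mathcal{C}'_{k})$ of the $kn$ vertices into $k$ equi-sized parts \emph{$A$-valid} if every vertex $v\in\mathcal{C}'_{i}$ satisfies $\deg_{\mathcal{C}'_{j}}(v)=a_{i,j}$ for all $i,j\in[k]$; by construction, the true partition is $A$-valid. Letting $X$ denote the number of $A$-valid partitions that differ from the true one (up to the finitely many cluster relabellings preserving $A$), it suffices by Markov's inequality to show $\E[X]=o(1)$. I would work throughout under the configuration-model construction of \S\ref{configuration:subsec}, using the fact that the simplicity event has probability bounded away from $0$, so conditioning on it changes probabilities only by a bounded factor.

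To estimate $\E[X]$, I would index alternative partitions by the \emph{overlap matrix} $M=(M_{i,j})$ with $M_{i,j}=|\mathcal{C}_{i}\cap\mathcal{C}'_{j}|$, a non-negative integer matrix with row and column sums equal to $n$. For fixed $M$, the number of alternative partitions with that overlap is $\prod_{i=1}^{k}\binom{n}{M_{i,1},\ldots,M_{i,k}}$, which by Stirling (writing $M_{i,j}=\alpha_{i,j}n$) has magnitude $\exp\bigl\{-n\sum_{i,j}\alpha_{i,j}\log\alpha_{i,j}+o(n)\bigr\}$, a multinomial entropy contribution naturally generalising the binary $H(\alpha)$ of \S\ref{notation}. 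The probability that a fixed alternative partition is $A$-valid reduces, via the exploration process, to the fraction of perfect matchings on the half-edges of each intra-cluster block and each inter-cluster bipartite block of the \emph{true} partition that simultaneously satisfy the cluster-degree profile dictated by the \emph{alternative} partition. A second Stirling expansion renders this probability as $\exp\{-n\Psi(\alpha;A)+o(n)\}$ with $\Psi$ an explicit function of the $\alpha_{i,j}$ and of the $\log a_{i,j}$. Summing gives $\E[X]\leq\sum_{\alpha}\exp\{n\,\Phi(\alpha)+o(n)\}$ for a rate $\Phi$, and the proof reduces to verifying $\Phi(\alpha)<0$ uniformly in $\alpha$ outside a neighbourhood of a permutation of the identity matrix.

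In the homogeneous case $a_{i,i}\equiv a$, I would decompose any non-trivial overlap into bilateral cluster-to-cluster exchanges and run the estimate pair by pair: condition~\eqref{intra_degrees_same} provides a uniform ceiling $B_i<Ca_{i,i}$, and taking the entries of $A$ sufficiently large makes the probability penalty for mis-assigning a positive fraction of vertices outweigh the multinomial entropy gain. The heterogeneous case is the main obstacle, since we lose the $H(\alpha)$-symmetry around $1/2$ that~\cite{09} exploited for $k=2$; here conditions~\eqref{not_all_intra_degrees_same_cond'} and~\eqref{not_all_intra_degrees_same_cond} play complementary roles. Condition~\eqref{not_all_intra_degrees_same_cond'} ensures that whenever $a_{i,i}<a_{j,j}$, the smaller intra-degree is still at least $(\tfrac12+2\delta_{i,j})a_{j,j}$, so a vertex of true type $j$ masquerading as type $i$ would have too many intra-cluster neighbours to be accommodated; condition~\eqref{not_all_intra_degrees_same_cond} controls the opposite situation by bounding $B_j$ above by $(\tfrac12-2\epsilon_{i,j})a_{i,i}$, ruling out compensation through the inter-cluster blocks. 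The technical heart, where I expect the bulk of the work, is to show regime by regime — classified by which pair $(i,j)$ supports the dominant exchange in $\alpha$ — that $\Phi(\alpha)<0$ via a concavity argument for the log-partition function of the underlying multivariate hypergeometric distribution; this is precisely what forces the restrictive forms of \eqref{not_all_intra_degrees_same_cond'}--\eqref{not_all_intra_degrees_same_cond}.
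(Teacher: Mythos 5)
Your high-level framework --- a first-moment bound over alternative partitions, with the count controlled by an entropy term and the probability of $A$-validity computed through the configuration model --- is the same skeleton the paper uses (the paper organises it as a union bound over candidate clusters $C_i\subset\mathcal{C}_i$ of prescribed sizes $\alpha_i n$ rather than over full overlap matrices, but this is a cosmetic difference). The genuine gap is that your proposal stops exactly where the theorem begins: the assertion that the rate function $\Phi(\alpha)$ is negative away from permutations of the identity \emph{is} the theorem, and the route you sketch for it --- an exact Stirling expansion of the matching probability followed by ``a concavity argument for the log-partition function'' --- is not substantiated and cannot work as stated. Concavity alone cannot decide the sign of $\Phi$: the sign depends on a quantitative competition between an entropy gain of at most $2$ bits per vertex and a per-vertex probability penalty that is only \emph{polynomially} small in the degrees (of order $a_{i,j}^{-1/2}$, coming from the mode of a binomial), which is precisely why the theorem needs the entries of $A$ to be sufficiently large and why hypotheses \eqref{not_all_intra_degrees_same_cond'}--\eqref{not_all_intra_degrees_same_cond} take the specific quantitative form they do. A proof must exhibit this competition explicitly, regime by regime.

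Concretely, two ingredients are missing and are not recoverable from what you wrote. First, to get any per-vertex penalty at all one must show that a candidate cluster overlapping at least two true clusters forces \emph{linearly many} vertices to carry a nonzero prescribed cross-degree; the paper obtains this from Friedman's spectral gap theorem for random regular graphs \cite{10} combined with a Cheeger-type inequality, giving $\deg(C_i,\mathcal{C}_i\setminus C_i)\geqslant\min\{\alpha_i,1-\alpha_i\}\bigl(\tfrac12-\tfrac{1}{\sqrt{a_{i,i}}}\bigr)a_{i,i}n$, and then must show this survives the cancellation against the $(a_{1,1}-a_{i,i})\alpha_i n$ term --- this is exactly where \eqref{not_all_intra_degrees_same_cond'} and \eqref{not_all_intra_degrees_same_cond} are consumed, not in the qualitative ``masquerading vertex'' way you describe. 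Second, the probability estimate bifurcates: when the target overlap $\alpha_j$ is of order at least $c/a_{i,j}$ one uses the $O\bigl((a_{i,j}p_s)^{-1/2}\bigr)$ bound on binomial point masses, but when $\alpha_j$ is smaller the conditional degree distribution concentrates at $0$ and that bound is useless; the paper must switch to $\Prob[\bin(a_{i,j},p_s)\geqslant 1]\leqslant a_{i,j}p_s$ and beat the entropy term by showing $1-\alpha_i$ is itself $O(1/(a_{i,i}a_{i,j}))$. Your proposal does not anticipate this dichotomy, and without it the claimed uniform negativity of $\Phi$ in the sparse-overlap regimes is unproven. As written, the proposal is a plausible plan rather than a proof.
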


From the discussion in \S\ref{configuration:subsec}, it suffices to establish Theorem~\ref{main_2} on the random multigraph in which each of the intra-cluster regular graphs and inter-cluster bipartite-regular graphs is generated via the configuration model. Fix \emph{any} non-negative $\alpha_{1}, \ldots, \alpha_{k}$ such that $\sum_{i=1}^{k} \alpha_{i} = 1$, and subsets $C_{i}$ of $\mathcal{C}_{i}$ such that $|C_{i}| = \alpha_{i} n$ for $i = 1, \ldots, k$. Let $D = \bigcup_{i=1}^{k} C_{i}$. To prove Theorem~\ref{main_2}, it is enough to establish Proposition~\ref{C_not_cluster}. We note here that although   Proposition~\ref{C_not_cluster} is stated for $a_{1,1}$, its proof will be analogous if we replace $a_{1,1}$ by any $a_{i,i}$ for $i \in \{2, \ldots, k\}$.

\begin{prop}\label{C_not_cluster}
Assume that the hypotheses of Theorem~\ref{main_2} hold, and that the entries of $A$ are sufficiently large. Suppose there exist at least two distinct $i, j \in \{1, \ldots, k\}$ such that $\alpha_{i}$ and $\alpha_{j}$ are strictly positive. Then a.a.s.\ the following cannot be true simultaneously:
\renewcommand{\theenumi}{\roman{enumi}}
\begin{enumerate}
\item the subgraph $\mathcal{G}|_{D}$ is $a_{1,1}$-regular;
\item there exists a partition of $V(G) \setminus D$ into subsets $D_{2}, \ldots, D_{k}$, each of size $n$, such that $\mathcal{G}|_{D_{i}}$ is $a_{i,i}$-regular, $\mathcal{G}|_{D_{i}, D_{j}}$ is $a_{i,j}$-bipartite-regular and $\mathcal{G}|_{D, D_{i}}$ is $a_{1,i}$-bipartite-regular for all distinct $i, j \in \{2, \ldots, k\}$.
\end{enumerate}
\end{prop}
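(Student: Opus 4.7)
The plan is to exploit the independence of the configuration-model matchings across the $k$ intra-cluster and $\binom{k}{2}$ inter-cluster parts described in \S\ref{configuration:subsec}, and to argue directly that the probability that a given candidate $D$ satisfies both (i) and (ii) is exponentially small in $n$. Since conditioning on simplicity changes probabilities only by a bounded factor, it is enough to bound the probability in the multigraph configuration model. Because (ii) is strictly stronger than the regularity constraint (i) alone, the calculation can afford to pretend (ii) is not present; the decisive quantitative content comes from (i).

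Fix $(\alpha_{1}, \ldots, \alpha_{k})$ and subsets $C_{i}$ as in the statement. Encode the shape of $\mathcal{G}|_{D}$ by the nonnegative integers $e_{i,j}$ with $i \leqslant j$, where $e_{i,i}$ counts the edges inside $C_{i}$ in the $a_{i,i}$-regular graph on $\mathcal{C}_{i}$, and $e_{i,j}$ (for $i \neq j$) counts the edges between $C_{i}$ and $C_{j}$ in the $a_{i,j}$-bipartite-regular graph on $\mathcal{C}_{i} \cup \mathcal{C}_{j}$. Condition (i) forces the linear system
\begin{equation*}
2 e_{i,i} + \sum_{j \neq i} e_{i,j} = a_{1,1}\, \alpha_{i} n \quad \text{for every } i \text{ with } \alpha_{i} > 0.
\end{equation*}
For each admissible flow profile $(e_{i,j})$, the probability that the independent configuration-model matchings produce exactly these counts factors as a product of one hypergeometric-type term per intra-cluster graph and one per inter-cluster graph; each such term admits a Stirling-approximation estimate of the form $\exp\bigl(-n\, \Psi_{i,j}(\alpha_{i}, \alpha_{j}, e_{i,j}/n)\bigr) \cdot \text{poly}(n)$, where $\Psi_{i,j}$ measures a Kullback--Leibler-type deviation of the realized count from its unconditional mean $a_{i,j}\alpha_{i}\alpha_{j}$ (respectively $a_{i,i}\alpha_{i}^{2}/2$).

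Summing the Stirling estimates over the polynomially many flow profiles $(e_{i,j})$ satisfying the linear system above yields, up to polynomial corrections, a bound of $\exp\bigl(-n\, \Psi^{\star}(\alpha)\bigr)$ for the probability of (i) given the fixed choice of $(C_{i})$, where $\Psi^{\star}(\alpha) = \min_{e} \sum_{i \leqslant j} \Psi_{i,j}$ is the Lagrangian optimum of the total rate function under the constraints. To recover Theorem~\ref{main_2}, one takes a union bound over all $(\alpha_{1}, \ldots, \alpha_{k})$ (polynomially many values in $\{0, 1/n, \ldots, 1\}^{k}$) and over all subsets $C_{i} \subset \mathcal{C}_{i}$ of size $\alpha_{i} n$; the latter contributes at most $\exp\bigl(n \sum_{i} H(\alpha_{i}) \ln 2\bigr)$ up to polynomials. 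It therefore suffices to show
\begin{equation*}
\Psi^{\star}(\alpha) > (\ln 2) \sum_{i} H(\alpha_{i})
\end{equation*}
strictly, for every admissible $\alpha$ that is not a permutation of $(1, 0, \ldots, 0)$.

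The main obstacle is precisely verifying this strict inequality; this is the step that makes essential use of the hypotheses of Theorem~\ref{main_2}. When all intra-cluster degrees $a_{i,i}$ coincide, an internal symmetry of the Lagrangian (which generalises the symmetry of $H$ around $1/2$ used in [\cite{09}, \S 3.2.2]) reduces the comparison to a one-parameter convexity argument, controlled by the global scale condition $C a_{i,i} > B_{i}$ in hypothesis (\ref{intra_degrees_same}). When the $a_{i,i}$ differ, the optimum $e^{\star}$ of the Lagrangian is no longer symmetric and one must split into the regimes $a_{i,i} < a_{j,j}$, handled by \eqref{not_all_intra_degrees_same_cond'} (which prevents $a_{i,i}$ from dropping far below $a_{j,j}/2$ and thereby forcing too many inter-cluster edges into $\mathcal{G}|_{D}$), and $a_{i,i} > a_{j,j}$ with $B_{j} > a_{i,i} - a_{j,j}$, handled by \eqref{not_all_intra_degrees_same_cond} (which keeps the dominant inter-cluster degree $B_{j}$ strictly below $a_{i,i}/2$). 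The quantitative gaps $\delta_{i,j}, \epsilon_{i,j}$ provide the slack that makes $\Psi^{\star}(\alpha) - (\ln 2) \sum_{i} H(\alpha_{i})$ strictly positive, and the case analysis is used to reconcile the various cross-terms that mix intra- and inter-cluster entropies. The auxiliary condition (ii) can be invoked as an additional linear constraint on the flow profile if further slack is required in borderline cases, but the principal work is in this extremisation of $\Psi^{\star}$.
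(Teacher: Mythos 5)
Your central device---recording only the aggregate edge counts $e_{i,j}$ between the pieces $C_{i}$ and minimising a Kullback--Leibler rate function $\Psi^{\star}(\alpha)$ over flow profiles satisfying the linear system $2e_{i,i}+\sum_{j\neq i}e_{i,j}=a_{1,1}\alpha_{i}n$---is too coarse to prove the proposition, and the inequality $\Psi^{\star}(\alpha)>(\ln 2)\sum_{i}H(\alpha_{i})$ that you defer is in fact false for a large set of admissible $\alpha$. The rate function $\Psi_{i,j}$ vanishes when $e_{i,j}$ equals its unconditional mean ($a_{i,j}\alpha_{i}\alpha_{j}n$, resp.\ $a_{i,i}\alpha_{i}^{2}n/2$), so $\Psi^{\star}(\alpha)=0$ whenever the means already satisfy your linear system; this happens, for instance, for \emph{every} $\alpha$ in the homogeneous case $a_{i,j}\equiv a$ (which the hypotheses of Theorem~\ref{main_2} permit), and more generally on a positive-dimensional set of $\alpha$'s where the system $a_{i,i}\alpha_{i}+\sum_{j\neq i}a_{i,j}\alpha_{j}=a_{1,1}$ is solvable. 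For such $\alpha$ your bound $\exp(-n\Psi^{\star})$ cannot beat the $\exp\bigl(n\sum_{i}H(\alpha_{i})\ln 2\bigr)$ entropy of choosing the subsets, and the union bound does not close. The reason is that condition (i) is not an aggregate constraint: it demands that \emph{each} of the $\alpha_{i}n$ vertices of $C_{i}$ have exactly degree $a_{1,1}$ in $D$, which is exponentially more restrictive than the totals being right even when the totals are typical.

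The paper's proof extracts the exponential decay precisely from these per-vertex constraints, and your proposal contains no substitute for that mechanism. Concretely, the paper conditions on the induced subgraph $\mathcal{G}|_{\mathcal{C}_{i}}$ for the index $i$ maximising $\alpha_{i}$, converts $a_{1,1}$-regularity of $\mathcal{G}|_{D}$ into prescribed values $g_{s}$ for $\sum_{\ell\neq i}\deg_{C_{\ell}}(v_{s})$, and uses the spectral gap of random regular graphs (via the Cheeger-type bound \eqref{C_{i}_mathcal{C}_{i}_deg}) together with pigeonhole to find a single $j$ with $G_{j}=\sum_{s}m^{(s)}_{j}=\Theta(n)$. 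It then runs the exploration process of \S\ref{configuration:subsec} to show that, conditioned on the past, each $\deg_{C_{j}}(v_{s})$ is close to a binomial and hits its prescribed value (or the two-point set $\{\eta_{j},\eta_{j}+1\}$) with probability bounded by a constant strictly less than $1$; multiplying over $\Theta(n)$ vertices beats the $2^{H(\alpha_{i})n+H(\alpha_{j})n}$ union bound. The case analysis in \S\ref{subcase_1}--\S\ref{subcase_3}, where the hypotheses \eqref{not_all_intra_degrees_same_cond'} and \eqref{not_all_intra_degrees_same_cond} enter, is then about making the per-vertex constant small enough relative to the entropy in the various regimes of $\alpha_{j}$. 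To salvage your approach you would have to replace the flow profile by the full degree sequence of $D$ (i.e.\ incorporate the vertex-level constraints into the rate function), at which point you are essentially reconstructing the paper's argument; as written, the proposal has a genuine gap at its key step.
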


In order to prove Proposition~\ref{C_not_cluster}, we start with the assumption that $\mathcal{G}|_{D}$ is $a_{1,1}$-regular. The proof requires consideration of a few different cases depending on the values of the $\alpha_{i}$'s, and these are addressed in Lemma~\ref{simpler}, \S\ref{subcase_1}, \S\ref{subcase_2} and \S\ref{subcase_3}.

\begin{lemma}\label{simpler}
If $a_{i,i} > a_{1,1}$ and $\alpha_{i} > \frac{B_{1}}{a_{i,i} - a_{1,1} + B_{1}}$, then the conclusion of Proposition~\ref{C_not_cluster} holds.
\end{lemma}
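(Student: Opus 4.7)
The plan is to derive a direct deterministic contradiction from the joint assumption that conditions (i) and (ii) of Proposition~\ref{C_not_cluster} both hold; no probabilistic input is needed, so the conclusion in fact holds not merely a.a.s.\ but always whenever the hypotheses of Lemma~\ref{simpler} are met. The whole argument boils down to counting the edges between $C_{i}$ and $\mathcal{C}_{i} \setminus C_{i}$ in two different ways.

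First I would use (i) to extract a lower bound on $\deg(C_{i}, \mathcal{C}_{i} \setminus C_{i})$. Since $C_{i} \subseteq D$ and $\mathcal{G}|_{D}$ is $a_{1,1}$-regular, every $v \in C_{i}$ satisfies $\deg_{C_{i}}(v) \leq \deg_{D}(v) = a_{1,1}$; combined with $\deg_{\mathcal{C}_{i}}(v) = a_{i,i}$, which always holds in the RSBM by construction, this yields $\deg_{\mathcal{C}_{i} \setminus C_{i}}(v) \geq a_{i,i} - a_{1,1}$. Summing over $v \in C_{i}$ gives $\deg(C_{i}, \mathcal{C}_{i} \setminus C_{i}) \geq (a_{i,i} - a_{1,1}) \alpha_{i} n$; note that this is where the hypothesis $a_{i,i} > a_{1,1}$ is essential.

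Next I would use (ii) to obtain a matching upper bound on the same quantity, counted from the opposite side. Each $w \in \mathcal{C}_{i} \setminus C_{i}$ belongs to some $D_{j}$ with $j \in \{2, \ldots, k\}$, and the $a_{1,j}$-bipartite-regularity of $\mathcal{G}|_{D, D_{j}}$ forces $\deg_{D}(w) = a_{1,j}$. Since $C_{i} \subseteq D$, this gives $\deg_{C_{i}}(w) \leq \deg_{D}(w) = a_{1,j} \leq B_{1}$. Summing over $w \in \mathcal{C}_{i} \setminus C_{i}$ yields $\deg(\mathcal{C}_{i} \setminus C_{i}, C_{i}) \leq B_{1} (1 - \alpha_{i}) n$.

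Combining the two bounds gives $(a_{i,i} - a_{1,1}) \alpha_{i} \leq B_{1} (1 - \alpha_{i})$, which rearranges to $\alpha_{i} \leq B_{1}/(a_{i,i} - a_{1,1} + B_{1})$ and directly contradicts the hypothesis of the lemma. I do not foresee any serious obstacle here; the only point needing mild care is noticing that every vertex of $\mathcal{C}_{i} \setminus C_{i}$ must lie in some $D_{j}$ with $j \geq 2$ (rather than in $D$ itself), which is precisely why the inter-cluster bound $a_{1,j} \leq B_{1}$ is available and thus why condition (ii) is genuinely needed — without it, one only has the far weaker estimate $\deg_{C_{i}}(w) \leq a_{i,i}$ and the resulting bound $\alpha_{i} \leq a_{i,i}/(2 a_{i,i} - a_{1,1})$ is strictly worse whenever $a_{i,i} > B_{1}$.
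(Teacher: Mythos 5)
Your proposal is correct and is essentially identical to the paper's own proof: both count $\deg(C_{i}, \mathcal{C}_{i}\setminus C_{i})$ from each side, using the $a_{1,1}$-regularity of $\mathcal{G}|_{D}$ for the lower bound $(a_{i,i}-a_{1,1})\alpha_{i}n$ and the bound $\deg_{C_{i}}(u)\leqslant \deg_{D}(u)=a_{1,j}\leqslant B_{1}$ for $u\in D_{j}$ for the upper bound $B_{1}(1-\alpha_{i})n$, then rearrange to contradict the hypothesis on $\alpha_{i}$. Your added remark that the contradiction is deterministic rather than merely a.a.s.\ is accurate and consistent with the paper's argument.
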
   

\begin{proof}
Assume that $\alpha_{i} > 0$ for some $i$ such that $a_{i,i} > a_{1,1}$, and that Proposition~\ref{C_not_cluster} does \emph{not} hold. As $\mathcal{G}|_{D}$ is $a_{1,1}$-regular, we have $\deg_{\mathcal{C}_{i} \setminus C_{i}}(v) \geqslant (a_{i,i} - a_{1,1})$ for every $v \in C_{i}$, yielding $\deg(C_{i}, \mathcal{C}_{i} \setminus C_{i}) \geqslant (a_{i,i} - a_{1,1}) \alpha_{i} n$. On the other hand, each $u$ in $\mathcal{C}_{i} \setminus C_{i}$ belongs to precisely one of the remaining clusters $D_{2}, \ldots, D_{k}$. If $u \in D_{j}$, then $\deg_{C_{i}}(u) \leqslant \deg_{D}(u) = a_{1,j}$. Thus $\deg(C_{i}, \mathcal{C}_{i} \setminus C_{i}) \leqslant B_{1} (1-\alpha_{i}) n$. These two inequalities together yield
\begin{equation}\label{better_bound_on_alpha_{i}}
(a_{i,i} - a_{1,1}) \alpha_{i} \leqslant B_{1} (1-\alpha_{i}) \quad \implies \quad \alpha_{i} \leqslant \frac{B_{1}}{a_{i,i} - a_{1,1} + B_{1}}, \nonumber
\end{equation}
thus completing the proof.
\end{proof}

From here onward, we only consider those $\ell \in \{1, \ldots, k\}$ such that $\alpha_{\ell} > 0$, without mentioning so every time. We shall let $i$ denote that index in $\{1, \ldots, k\}$ (if this is not unique, we choose any such $i$ and fix it) for which $\alpha_{i} \geqslant \alpha_{\ell}$ for all $\ell \in \{1, \ldots, k\} \setminus \{i\}$. Note that this guarantees, by the pigeon hole principle, that $\alpha_{i} \geqslant \frac{1}{k}$. 

Under the assumption that $\mathcal{G}|_{D}$ is $a_{1,1}$-regular, we have $\sum_{\ell=1}^{k} \deg_{C_{j}}(v) = a_{1,1}$ for every $v \in C_{i}$. On the other hand, $\deg_{C_{i}}(v) + \deg_{\mathcal{C}_{i} \setminus C_{i}}(v) = a_{i,i}$ for each $v \in C_{i}$. These together imply
\begin{equation}\label{degree_identity}
\sum_{\ell \in \{1, \ldots, k\} \setminus \{i\}} \deg_{C_{\ell}}(v) = a_{1,1} - a_{i,i} + \deg_{\mathcal{C}_{i} \setminus C_{i}}(v).
\end{equation}

To prove Proposition\ref{C_not_cluster}, we first condition on the $\sigma$-field $\mathcal{F}$ comprising the following information:
\renewcommand{\theenumi}{\roman{enumi}}
\begin{enumerate}
\item the vertex sets of $\mathcal{C}_{\ell}$ and $C_{\ell}$ for all $\ell = 1, \ldots, k$,
\item the subgraph $\mathcal{G}|_{\mathcal{C}_{i}}$ induced on $\mathcal{C}_{i}$.
\end{enumerate}
Given $\mathcal{F}$, we enumerate the vertices of $\mathcal{C}_{i}$ as $v_{1}, \ldots, v_{n}$ such that $C_{i} = \left\{v_{1}, \ldots, v_{\alpha_{i}n}\right\}$. From \eqref{degree_identity}, we set 
\begin{equation}\label{g_{s}_values}
g_{s} = a_{1,1} - a_{i,i} + \deg_{\mathcal{C}_{i} \setminus C_{i}}(v_{s})
\end{equation}
for all $s = 1, \ldots, \alpha_{i} n$. The random variables $g_{s}$ are measurable with respect to $\mathcal{F}$. The conditional probability of the event that $\mathcal{G}|_{D}$ is $a_{1,1}$-regular is bounded above by the conditional probability of the event 
\begin{equation}
A = \left\{\sum_{\ell \in \{1, \ldots, k\} \setminus \{i\}} \deg_{C_{\ell}}(v_{s}) = g_{s} \text{ for all } s = 1, \ldots, \alpha_{i} n\right\}. \nonumber
\end{equation}
We show that the probability of the event $A$ is $o(1)$ as $n \rightarrow \infty$. 

First, we express $A$ as the union of pairwise disjoint events. For $g \in \mathbb{N}$, let us define the following subset of ordered $(k-1)$-tuples of non-negative integers:
\begin{equation}
\mathcal{S}_{g} = \left\{\left(m_{1}, \ldots, m_{i-1}, m_{i+1}, \ldots, m_{k}\right): 0 \leqslant m_{\ell} \leqslant a_{i,\ell} \text{ for all } \ell \in \{1, \ldots, k\} \setminus \{i\}, \sum_{\ell \in \{1, \ldots, k\} \setminus \{i\}} m_{\ell} = g\right\}. \nonumber
\end{equation} 
Then $A$ can be written as the union of the events
\begin{equation}
A\left(m^{(s)}: s = 1, \ldots, \alpha_{i} n\right) = \left\{\deg_{C_{\ell}}(v_{s}) = m^{(s)}_{\ell} \text{ for all } \ell \in \{1, \ldots, k\} \setminus \{i\} \text{ and } s = 1, \ldots, \alpha_{i} n\right\} \nonumber
\end{equation}
where $m^{(s)} = \left(m^{(s)}_{\ell}: \ell \in \{1, \ldots, k\} \setminus \{i\}\right)$ belongs to $\mathcal{S}_{g_{s}}$ for all $s = 1, \ldots, \alpha_{i} n$. Note, from the mutual independence of the subgraphs $\mathcal{G}|_{\mathcal{C}_{i}, \mathcal{C}_{\ell}}$ for $\ell \in \{1, \ldots, k\} \setminus \{i\}$, that 
\begin{align}\label{independence}
\Prob\left[A\left(m^{(s)}: s = 1, \ldots, \alpha_{i} n\right)\Big|\mathcal{F}\right] &= \prod_{\ell \in \{1, \ldots, k\} \setminus \{i\}} \Prob\left[\deg_{C_{\ell}}(v_{s}) = m^{(s)}_{\ell} \text{ for all } s = 1, \ldots, \alpha_{i} n\Big|\mathcal{F}\right] \nonumber\\
&\leqslant \Prob\left[\deg_{C_{\ell}}(v_{s}) = m^{(s)}_{\ell} \text{ for all } s = 1, \ldots, \alpha_{i} n\Big|\mathcal{F}\right]
\end{align}
for each $\ell \in \{1, \ldots, k\} \setminus \{i\}$. The goal now is to fix \emph{any} $m^{(s)} \in \mathcal{S}_{g_{s}}$ for each $s$ and establish that the probability of the event $\left\{\deg_{C_{\ell}}(v_{s}) = m^{(s)}_{\ell} \text{ for all } s = 1, \ldots, \alpha_{i} n\right\}$ for at least one $\ell \in \{1, \ldots, k\} \setminus \{i\}$ is $o(n^{-1})$ as $n \rightarrow \infty$. 


Since $m^{(s)} \in \mathcal{S}_{g_{s}}$ for each $s$, from \eqref{g_{s}_values}, we have
\begin{equation}\label{G_value}
\sum_{\ell \in \{1, \ldots, k\} \setminus \{i\}} \sum_{s=1}^{\alpha_{i} n} m^{(s)}_{\ell} = \sum_{s=1}^{\alpha_{i} n} \sum_{\ell \in \{1, \ldots, k\} \setminus \{i\}} m^{(s)}_{\ell} = \sum_{s=1}^{\alpha_{i} n} g_{s} = \deg(C_{i}, \mathcal{C}_{i} \setminus C_{i}) + (a_{1,1} - a_{i,i})\alpha_{i} n. 
\end{equation}
For $G$ uniformly randomly chosen from $\mathcal{R}^{n}_{d}$, [\cite{10}, Theorem 1.1] showed that $\gamma \geqslant 1 - \frac{2}{\sqrt{d}}$ a.a.s.\ as $n \rightarrow \infty$, where $\gamma$ is the spectral gap for the adjacency matrix of $G$. Given a $d$-regular graph $G$ on $n$ vertices and a subset $S$ of $V(G)$ with $|S| \leqslant \frac{n}{2}$, [\cite{06}, Theorem 13.14] (see also \cite{11}, \cite{12}, and [\cite{05}, Theorem 6]) established that
\begin{equation}
\frac{\gamma}{2} \leqslant \frac{\deg(S, V(G) \setminus S)}{d|S|}. \nonumber
\end{equation}
Combining these, we get 
\begin{equation}\label{C_{i}_mathcal{C}_{i}_deg}
\deg\left(\mathcal{C}_{i} \setminus C_{i}, C_{i}\right) \geqslant \min\{\alpha_{i}, 1 - \alpha_{i}\} \left(\frac{1}{2} - \frac{1}{\sqrt{a_{i,i}}}\right) a_{i,i} n.
\end{equation}
From \eqref{G_value} and \eqref{C_{i}_mathcal{C}_{i}_deg}, we get
\begin{equation}
\sum_{\ell \in \{1, \ldots, k\} \setminus \{i\}} \sum_{s=1}^{\alpha_{i} n} m^{(s)}_{\ell} \geqslant \min\{\alpha_{i}, 1 - \alpha_{i}\} \left(\frac{1}{2} - \frac{1}{\sqrt{a_{i,i}}}\right) a_{i,i} n + (a_{1,1} - a_{i,i})\alpha_{i} n. \nonumber
\end{equation}
By the pigeon-hole principle, there exists at least one $j \in \{1, \ldots, k\} \setminus \{i\}$ such that
\begin{equation}\label{choice_of_j}
\sum_{s=1}^{\alpha_{i} n} m^{(s)}_{j} \geqslant \frac{1}{k-1}\left\{\min\{\alpha_{i}, 1 - \alpha_{i}\} \left(\frac{1}{2} - \frac{1}{\sqrt{a_{i,i}}}\right) a_{i,i} n + (a_{1,1} - a_{i,i})\alpha_{i} n\right\}.
\end{equation}
For the rest of the proof, we fix such a $j$, and establish the following lemma:
\begin{lemma}\label{main_step_lemma}
Let $A_{j}$ denote the event that there exist $C_{i} = \{v_{1}, \ldots, v_{\alpha_{i} n}\} \subset \mathcal{C}_{i}$ and $C_{j} \subset \mathcal{C}_{j}$ such that $\deg_{C_{j}}(v_{s}) = m^{(s)}_{j}$ for all $s = 1, \ldots, \alpha_{i} n$. Then $\Prob[A_{j}] = o(n^{-1})$.
\end{lemma}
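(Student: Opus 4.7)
The plan is to compute the probability $\Prob[\deg_{C_j}(v_s) = m^{(s)}_j \text{ for all } s]$ exactly for a fixed $C_j \subset \mathcal{C}_j$ with $|C_j| = \alpha_j n$ via the configuration model generating the bipartite $a_{i,j}$-regular graph between $\mathcal{C}_i$ and $\mathcal{C}_j$, then union-bound over the $\binom{n}{\alpha_j n}$ possible choices of $C_j$, and finally apply Stirling's formula to show that the logarithm of the resulting bound tends to $-\infty$ faster than $-\log n$.

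For a fixed $C_j$, write $M = \sum_{s=1}^{\alpha_i n} m^{(s)}_j$. The matchings satisfying the degree constraint are enumerated in three steps: (i) choose which of each $v_s$'s $a_{i,j}$ half-edges are routed to $C_j$, giving $\prod_s \binom{a_{i,j}}{m^{(s)}_j}$ possibilities; (ii) choose which $a_{i,j}\alpha_j n - M$ of the $a_{i,j}(1-\alpha_i) n$ half-edges of $\mathcal{C}_i \setminus C_i$ are routed to $C_j$, giving $\binom{a_{i,j}(1-\alpha_i) n}{a_{i,j}\alpha_j n - M}$; and (iii) independently pair up the $a_{i,j}\alpha_j n$ ``to $C_j$'' half-edges on both sides and the $a_{i,j}(1-\alpha_j)n$ ``to $\mathcal{C}_j \setminus C_j$'' half-edges, contributing $(a_{i,j}\alpha_j n)!\cdot (a_{i,j}(1-\alpha_j) n)!$. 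Dividing by the total matching count $(a_{i,j} n)!$ and applying generalised Vandermonde $\prod_s \binom{a_{i,j}}{m^{(s)}_j} \leq \binom{a_{i,j}\alpha_i n}{M}$ yields
\begin{equation*}
\Prob\left[\deg_{C_j}(v_s) = m^{(s)}_j \text{ for all } s\right] \;\leq\; \frac{\binom{a_{i,j}\alpha_i n}{M}\binom{a_{i,j}(1-\alpha_i) n}{a_{i,j}\alpha_j n - M}}{\binom{a_{i,j} n}{a_{i,j}\alpha_j n}},
\end{equation*}
which is exactly the hypergeometric probability $\Prob[X = M]$ for $X$ drawn from a population of $a_{i,j} n$ half-edges containing $a_{i,j}\alpha_i n$ ``successes'' with $a_{i,j}\alpha_j n$ draws, whose mean is $a_{i,j}\alpha_i\alpha_j n$.

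Setting $\beta = M/(a_{i,j}\alpha_i n) \in [0,1]$ and invoking Stirling, the logarithm of $\binom{n}{\alpha_j n}$ times the above bound equals, up to $O(\log n)$,
\begin{equation*}
n H(\alpha_j) \,+\, a_{i,j}\, n\,\left[\alpha_i H(\beta) + (1 - \alpha_i)\, H\!\left(\tfrac{\alpha_j - \alpha_i\beta}{1 - \alpha_i}\right) - H(\alpha_j)\right],
\end{equation*}
with $H$ the Shannon entropy from \S\ref{notation}. Strict concavity of $H$ makes the bracketed quantity $\leq 0$, vanishing only at $\beta = \alpha_j$; a direct Taylor expansion around that point gives a quadratic upper bound of the form $-c(\beta - \alpha_j)^2$ for a universal $c > 0$.

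The main obstacle is to extract from the lower bound \eqref{choice_of_j} and the hypotheses of Theorem~\ref{main_2} that $\beta$ is bounded away from $\alpha_j$ by a positive constant, uniformly in $n$. This needs a case analysis — precisely the one mirrored in \S\ref{subcase_1}--\S\ref{subcase_3} — dividing on the sign of $a_{1,1} - a_{i,i}$ and on whether $\alpha_i$ exceeds $\tfrac{1}{2}$. In each regime, the constraints \eqref{not_all_intra_degrees_same_cond'}--\eqref{not_all_intra_degrees_same_cond} are used to show that the right-hand side of \eqref{choice_of_j}, divided by $a_{i,j}\alpha_i n$, exceeds $\alpha_j$ by a definite amount; this forces the bracketed expression to lie below a negative constant independent of $n$. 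Taking the entries of $A$ sufficiently large then makes the term $a_{i,j} n \cdot (\text{bracket})$ overwhelm the positive contribution $n H(\alpha_j)$ by an arbitrarily large linear-in-$n$ margin, so $\log \Prob[A_j] \to -\infty$ at linear rate, giving $\Prob[A_j] = o(n^{-1})$ as claimed.
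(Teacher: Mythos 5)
Your counting of matchings and the resulting hypergeometric bound
\begin{equation*}
\Prob\bigl[\deg_{C_j}(v_s)=m^{(s)}_j \ \forall s\bigr]\;\leqslant\;\binom{a_{i,j}\alpha_i n}{M}\binom{a_{i,j}(1-\alpha_i)n}{a_{i,j}\alpha_j n-M}\Big/\binom{a_{i,j}n}{a_{i,j}\alpha_j n}
\end{equation*}
are correct, but the Vandermonde step $\prod_s\binom{a_{i,j}}{m^{(s)}_j}\leqslant\binom{a_{i,j}\alpha_i n}{M}$ discards exactly the information that makes the event rare. After that step your bound depends on the $m^{(s)}_j$ only through their sum $M=G_j$, i.e.\ you are bounding the probability that $\deg(C_i,C_j)$ equals $M$, not that each vertex hits its prescribed degree. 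The whole argument then hinges on your claim that $\beta=G_j/(a_{i,j}\alpha_i n)$ is separated from $\alpha_j$ by a constant, and this is false in general: \eqref{choice_of_j} gives only a \emph{lower} bound $G_j/n\gtrsim\min\{\alpha_i,1-\alpha_i\}\,a_{i,i}/(4(k-1))$, while the hypergeometric mean corresponds to $G_j/n=a_{i,j}\alpha_i\alpha_j$. Since the model (and hypothesis \ref{intra_degrees_same}) explicitly allows $a_{i,j}$ to be much larger than $a_{i,i}$, and $\alpha_j$ may be of constant order (e.g.\ $k=3$, $\alpha_i=\alpha_j=\tfrac12$), the lower bound can sit strictly below the mean, so nothing prevents $\beta=\alpha_j$. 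At that point your bracket vanishes, the hypergeometric point probability is only $\Theta(n^{-1/2})$, and the bound is destroyed by the $2^{nH(\alpha_j)}$ union factor. This is precisely why the paper does \emph{not} reduce to the sum: it keeps the per-vertex constraints, replaces them by the weaker event $\deg_{C_j}(v_s)\in\{\eta_j,\eta_j+1\}$ via \eqref{optimize_probab}, and uses the exploration process to bound each conditional factor by the binomial mode $O(1/\sqrt{a_{i,j}p_s})$ (or a Poisson/first-moment bound when $p_s$ is small), obtaining a factor $\gamma^{\Theta(n)}$ with $\gamma<1$ \emph{even when $G_j$ is at its typical value}; the case analysis on the ranges \eqref{range_1}--\eqref{range_3} of $\alpha_j$ and on $\eta_j=0$ versus $\eta_j\neq 0$ exists to control $p_s$, not to control $\beta-\alpha_j$.

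Two further, smaller problems. First, the event $A_j$ quantifies over both $C_i$ and $C_j$, so the union bound must carry $\binom{n}{\alpha_i n}\binom{n}{\alpha_j n}$; you only account for $nH(\alpha_j)$ and would need your negative term to beat $n\{H(\alpha_i)+H(\alpha_j)\}$, up to $2n$, as in the paper. Second, even where a separation $\beta-\alpha_j>0$ does hold, its size generally decays with the entries of $A$ (e.g.\ like $1/a_{i,j}$ in the regimes \eqref{range_2}--\eqref{range_3}), so the product $a_{i,j}n\cdot c(\beta-\alpha_j)^2$ need not grow when you ``take the entries of $A$ sufficiently large''; the constant $c$ and the separation must be tracked jointly, which is again what the paper's case-by-case bookkeeping accomplishes.
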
 
The proof of this lemma is accomplished through the consideration of three different cases, in \S\ref{subcase_1}, \S\ref{subcase_2} and \S\ref{subcase_3}. 

\subsection{When $a_{1,1} \geqslant a_{i,i}$}\label{subcase_1} We note at the very outset that the analysis of \S\ref{subcase_1} is enough for the special and commonly studied situation where all intra-cluster degrees are the same. We set $G_{j} = \sum_{s=1}^{\alpha_{i} n} m^{(s)}_{j}$, so that from \eqref{choice_of_j}, we have, for all $a_{i,i}$ sufficiently large,
\begin{equation}\label{G_{j}_lower_bound_subcase_1}
\frac{G_{j}}{n} \geqslant \frac{\min\{\alpha_{i}, 1 - \alpha_{i}\} a_{i,i}}{4(k-1)}.
\end{equation}

We refer the reader to [\cite{09}, Lemma 2] for the following inequality: 
\begin{multline}\label{optimize_probab}
\Prob\left[\deg_{C_{j}}(v_{s}) = m^{(s)}_{j} \text{ for all } s = 1, \ldots, \alpha_{i} n\Big|\mathcal{F}\right] \\ \leqslant \Prob\left[\deg_{C_{j}}(v_{s}) \in \{\eta_{j}, \eta_{j}+1\} \text{ for all } s = 1, \ldots, \alpha_{i} n, \sum_{s=1}^{\alpha_{i} n} \deg_{C_{j}}(v_{s}) = \sum_{s=1}^{\alpha_{i} n} m^{(s)}_{j}\Big|\mathcal{F}\right], 
\end{multline}
where $\eta_{j} = \left\lfloor \frac{G_{j}}{\alpha_{i} n}\right\rfloor$. Notice that we have the trivial bound $G_{j} = \deg(C_{i}, C_{j}) \leqslant \deg(C_{j}, \mathcal{C}_{i}) = \alpha_{j} a_{i,j} n$, so that 
\begin{equation}\label{eta_{j}_upper_bound}
\eta_{j} \leqslant \frac{\alpha_{j} a_{i,j}}{\alpha_{i}}.
\end{equation}
We invoke the configuration model discussed in \S\ref{configuration:subsec}, and outline, in the next paragraph, some foundational aspects of the argument that resemble [\cite{09}, Lemma 5]. Let $\xi_{a_{i,j}(s-1)+1}, \ldots, \xi_{a_{i,j}s}$ denote the half-edges of type $\{i, j\}$ emanating from vertex $v_{s}$, for each $s = 1, \ldots, \alpha_{i} n$. Let $B_{t}$ denote the indicator random variable of the event that $\xi_{t}$ is matched with a half-edge of type $\{i, j\}$ emanating from $C_{j}$. Conditioned on $B_{1}, \ldots, B_{t}$, the random variable $B_{t+1}$ is Bernoulli with probability 
\begin{equation}
\hat{p}_{t} = \frac{\alpha_{j} a_{i,j} n - \sum_{t' \leqslant t} B_{t'}}{a_{i,j} n - t}.\nonumber
\end{equation}
For all $1 \leqslant t \leqslant \alpha_{i}n$, we see that $\left|\hat{p}_{t} - \hat{p}_{t-1}\right| \leqslant O(n^{-1})$, so that for all $s = 1, \ldots, \alpha_{i} n$, there exists $p_{s} \in (0,1)$ such that
\begin{equation}\label{bin_approx}
\left|\left|\deg_{C_{j}}(v_{s})\Big|\mathcal{G}_{s-1}, \bin\left(a_{i,j}, p_{s}\right)\right|\right|_{\tv} = O\left(\frac{1}{n}\right),
\end{equation}
where $\mathcal{G}_{s}$ denotes the $\sigma$-field generated by $\left\{\xi_{t}, 1 \leqslant t \leqslant a_{i,j}s\right\}$, for each $s$. Given that each of $\deg_{C_{j}}(v_{1})$, $\ldots$, $\deg_{C_{j}}(v_{s-1})$ takes values in $\{\eta_{j}, \eta_{j}+1\}$, the number of half-edges emanating from $C_{j}$ that have not yet been matched is at least $\alpha_{j} a_{i,j} n - (\eta_{j}+1)(s-1)$ and at most $\alpha_{j} a_{i,j} n - \eta_{j}(s-1)$. The number of half-edges from $\mathcal{C}_{j}$ that are left to be matched is $a_{i,j}(n - s + 1)$. Thus
\begin{equation}\label{range_p_s}
\frac{\alpha_{j} a_{i,j} n - (\eta_{j}+1)(s-1)}{a_{i,j}(n-s+1)} \leqslant p_{s} \leqslant \frac{\alpha_{j} a_{i,j} n - \eta_{j}(s-1)}{a_{i,j}(n-s+1)}.
\end{equation}

We shall now consider three different ranges of values of $\alpha_{j}$, where $j$ is as chosen by \eqref{choice_of_j}. First, consider
\begin{equation}\label{range_1}
\frac{c}{a_{i,j}} < \alpha_{j} \leqslant \frac{1}{2},
\end{equation}
where 
\begin{equation}\label{cond_c_subcase_1}
\log_{2}c > \max\left\{16k + 1, \frac{2(k-1)C}{4^{8k-1}}\right\},
\end{equation}
where $C$ is as in \ref{intra_degrees_same} of Theorem~\ref{main_2}. We first consider the case of $\eta_{j} \neq 0$. For each $s = 1, \ldots, \left\lfloor\frac{\alpha_{i} n}{4}\right\rfloor$, from \eqref{eta_{j}_upper_bound} and \eqref{range_p_s}, we have
\begin{equation}\label{p_s_lower_bound}
p_{s} \geqslant \frac{\alpha_{j} a_{i,j} n - 2 \eta_{j} \cdot \frac{\alpha_{i} n}{4}}{a_{i,j} n} \geqslant \frac{\alpha_{j} a_{i,j} n - 2 \cdot \frac{\alpha_{j} a_{i,j}}{\alpha_{i}} \cdot \frac{\alpha_{i} n}{4}}{a_{i,j} n} = \frac{\alpha_{j}}{2} > \frac{c}{2 a_{i,j}}.
\end{equation}
Using \eqref{bin_approx}, \eqref{p_s_lower_bound}, the fact that the mode of the $\bin(n, p)$ distribution is $\lfloor (n+1)p \rfloor$, and the same argument as in [\cite{09}, Lemma 4], we conclude that, for all $s = 1, \ldots, \left\lfloor\frac{\alpha_{i} n}{4}\right\rfloor$, 
\begin{align}\label{each_deg_probab_lower_bound_subcase_1}
\Prob\left[\deg_{C_{j}}(v_{s}) \in \{\eta_{j}, \eta_{j}+1\}\Big|\mathcal{G}_{s-1}\right] &\leqslant \Prob\left[\bin\left(a_{i,j}, p_{s}\right) = \lfloor (a_{i,j} + 1) p_{s}\rfloor\right] + O\left(\frac{1}{n}\right) \nonumber\\
&\leqslant O\left(\sqrt{\frac{1}{a_{i,j} p_{s}}}\right) \leqslant O\left(\sqrt{\frac{2}{c}}\right).
\end{align}
We now use union bounds and Stirling's approximation to bound above $\Prob[A_{j}]$ by
\begin{align}
{n \choose \alpha_{i}n} {n \choose \alpha_{j}n} \left\{O\left(\sqrt{\frac{2}{c}}\right)\right\}^{\frac{\alpha_{i} n}{4}} &\sim O\left(\frac{2^{H(\alpha_{i})n + H(\alpha_{j})n}}{\sqrt{\alpha_{i} \alpha_{j} (1-\alpha_{i}) (1-\alpha_{j})} n} \left(\frac{2}{c}\right)^{\frac{\alpha_{i} n}{8}}\right) \nonumber\\
&\leqslant O\left(\frac{2^{2n}}{\sqrt{\alpha_{i} \alpha_{j} (1-\alpha_{i}) (1-\alpha_{j})} n} \left(\frac{2}{c}\right)^{\frac{n}{8k}}\right) \nonumber
\end{align}
which is $o(n^{-1})$ by \eqref{cond_c_subcase_1}. 

Now we consider the case of $\eta_{j} = 0$. We enumerate the vertices of $C_{j}$ so that $\deg_{C_{j}}(v_{s}) = 1$ for $s = 1, \ldots, G_{j}$ and $\deg_{C_{j}}(v_{s}) = 0$ for all $G_{j}+1 \leqslant s \leqslant \alpha_{i}n$. Inspired by the lower bound in \eqref{range_p_s}, set
\begin{equation}
f(s-1) = \frac{\alpha_{j} a_{i,j} n - s + 1}{a_{i,j}(n-s+1)}, \text{ for all } s \in [G_{j}]. \nonumber
\end{equation} 
The function $f$ being strictly increasing for $\alpha_{j}$ as in \eqref{range_1}, a uniform lower bound on $p_{s}$ for all $s = 1, \ldots, G_{j}$ is $f(0) = \alpha_{j} > \frac{c}{a_{i,j}}$. By similar computations as used in deriving in \eqref{each_deg_probab_lower_bound_subcase_1}, we get
\begin{equation}\label{each_deg_probab_lower_bound_subcase_1_eta_0}
\Prob\left[\deg_{C_{j}}(v_{s}) = 1\Big|\mathcal{G}_{s-1}\right] \leqslant O\left(\sqrt{\frac{1}{c}}\right)
\end{equation}
for all $s = 1, \ldots, G_{j}$. Note that $\alpha_{j} \leqslant \alpha_{i}$ by our choice of $i$, and $\alpha_{i} + \alpha_{j} \leqslant 1$ implies that $\alpha_{j} \leqslant 1 - \alpha_{i}$. Hence, from \eqref{G_{j}_lower_bound_subcase_1}, \eqref{range_1} and \eqref{cond_c_subcase_1} we conclude that
\begin{equation}\label{G_{j}_lower_bound_subcase_1_subsub_1}
\frac{G_{j}}{n} \geqslant \frac{\alpha_{j} a_{i,i}}{4(k-1)} > \frac{c a_{i,i}}{4 (k-1) a_{i,j}} > \frac{2 \cdot 4^{8k-1}}{(k-1)C}.
\end{equation}
In this case, the upper bound on $\Prob[A_{j}]$ is given by
\begin{align}
{n \choose \alpha_{i}n} {n \choose \alpha_{j}n} \left\{O\left(\sqrt{\frac{1}{c}}\right)\right\}^{G_{j}} &\leqslant O\left(\frac{2^{2n}}{\sqrt{\alpha_{i} \alpha_{j} (1-\alpha_{i}) (1-\alpha_{j})} n} \left(\frac{1}{c}\right)^{\frac{4^{8k-1} n}{(k-1)C}}\right) \nonumber
\end{align}
which is $o(n^{-1})$ due to \eqref{cond_c_subcase_1}.

Next, we consider the following range of values of $\alpha_{j}$:
\begin{equation}\label{range_2}
\frac{1}{a_{i,j}^{2}} < \alpha_{j} \leqslant \frac{c}{a_{i,j}},
\end{equation}
where $c$ is as in \eqref{cond_c_subcase_1}. At the very outset of this case, we note that, if $\alpha_{i} \leqslant \frac{1}{2}$, we must have
\begin{align}
c \geqslant \alpha_{j} a_{i,j} \geqslant \frac{G_{j}}{n} \geqslant \frac{\alpha_{i} a_{i,i}}{4(k-1)}, \nonumber
\end{align}
implying that $\alpha_{i} \leqslant \frac{4(k-1)c}{a_{i,i}}$. For all $a_{i,i}$ sufficiently large, this upper bound is smaller than $\frac{1}{k}$, giving us a contradiction. 

\begin{rem}\label{no_need_1}
The above reasoning shows that for the range $\alpha_{j} \leqslant \frac{c}{a_{i,j}}$, we need not consider $\alpha_{i} \leqslant \frac{1}{2}$. 
\end{rem} 

When $\alpha_{i} > \frac{1}{2}$, we have, by similar reasoning as above,
\begin{equation}\label{lower_bound_a_{i,i}_subcase_1_subsub_2}
c \geqslant \frac{(1-\alpha_{i}) a_{i,i}}{4(k-1)} \quad \implies \quad \alpha_{i} \geqslant 1 - \frac{4 c (k-1)}{a_{i,i}}.
\end{equation}
For all $a_{i,i}$ sufficiently large, this yields:
\begin{equation}\label{ineq_seq}
\alpha_{j} < 1 - \alpha_{i} \leqslant \frac{4c (k-1)}{a_{i,i}} < \frac{1}{2} < 1 - \frac{4 c (k-1)}{a_{i,i}} \leqslant \alpha_{i} < 1 - \alpha_{j},
\end{equation}
and by the concave nature of the entropy function and its symmetry around $\frac{1}{2}$, we conclude that
\begin{equation}\label{entropy_bound_subcase_1_subsub_2}
H(\alpha_{i}) + H(\alpha_{j}) \leqslant 2 H\left(\frac{4 c (k-1)}{a_{i,i}}\right).
\end{equation}

We first address the case of $\eta_{j} \neq 0$. From \eqref{range_p_s} and \eqref{range_2}, for all $s = 1, \ldots, \left\lfloor\frac{\alpha_{i} n}{2}\right\rfloor$, we get
\begin{equation}\label{p_s_upper_subcase_1}
p_{s} \leqslant \frac{\alpha_{j} a_{i,j} n}{a_{i,j}\left(n - \frac{\alpha_{i}n}{2}\right)} < \frac{2c}{a_{i,j}}, 
\end{equation} 
so that $\deg_{C_{j}}(v_{s})$, conditioned on $\mathcal{G}_{s-1}$, is stochastically dominated by $\bin\left(a_{i,j}, \frac{2c}{a_{i,j}}\right)$, which in turn can be approximated by the $\poi(2c)$ distribution. Thus 
\begin{equation}\label{deg_upper_bound_subcase_1_subsub_2}
\Prob\left[\deg_{C_{j}}(v_{s}) \in \left\{\eta_{j}, \eta_{j+1}\right\}\Big|\mathcal{G}_{s-1}\right] \leqslant \Prob\left[\poi(2c) \geqslant 1\right] = \gamma
\end{equation}
where $\gamma$ is a constant that depends only on $c$. Using \eqref{entropy_bound_subcase_1_subsub_2} and \eqref{deg_upper_bound_subcase_1_subsub_2}, and $\frac{\alpha_{i} n}{2} > \frac{n}{4}$, we get the following upper bound on $\frac{1}{n} \log_{2} \Prob[A_{j}]$:
\begin{equation}\label{log_bound_subcase_1_subsub_2}
-\frac{\log_{2}\left(\alpha_{i} \alpha_{j} (1-\alpha_{i}) (1-\alpha_{j})\right)}{2n} - \frac{\log_{2} n}{n} + 2 H\left(\frac{4 c (k-1)}{a_{i,i}}\right) + \frac{\log_{2} \gamma}{4}.
\end{equation}
The first two terms approach $0$ as $n \rightarrow \infty$. The last term is a strictly negative constant, and as $a_{i,i}$ grows, the third term goes to $0$. Hence \eqref{log_bound_subcase_1_subsub_2} is strictly negative for all sufficiently large $a_{i,i}$, as $n \rightarrow \infty$. 

\begin{rem}\label{no_need_2}
Observe that, in the above argument, nowhere has the lower bound on $\alpha_{j}$ from \eqref{range_2} been used. This shows that as far as the case of $\eta_{j} \neq 0$ is concerned, our proof of Lemma~\ref{main_step_lemma} for the regime of \S\ref{subcase_1} ends here. In the rest of \S\ref{subcase_1}, we only consider $\eta_{j} = 0$.
\end{rem}

Now, we consider $\eta_{j} = 0$ and $\alpha_{j}$ in the range given by \eqref{range_2}. If $G_{j} \geqslant \frac{\alpha_{i}n}{2}$, then the same argument as above will be enough. 
\begin{rem}\label{no_need_3}
This shows that for all $\alpha_{j} \leqslant \frac{c}{a_{i,j}}$ and $\eta_{j} = 0$, as long as $G_{j} \geqslant \frac{\alpha_{i}n}{2}$, our proof of Lemma~\ref{main_step_lemma} is already complete. Henceforth, we only consider $\eta_{j} = 0$ and $G_{j} < \frac{\alpha_{i} n}{2}$.
\end{rem}

If $G_{j} < \frac{\alpha_{i} n}{2}$, then for each $s = 1, \ldots, G_{j}$, the bound in \eqref{p_s_upper_subcase_1} holds, and hence so does \eqref{deg_upper_bound_subcase_1_subsub_2}. Together with \eqref{G_{j}_lower_bound_subcase_1} and \eqref{ineq_seq}, this yields the following upper bound on $\frac{1}{n} \log_{2}\Prob[A_{j}]$:
\begin{equation}
-\frac{\log_{2}(\alpha_{i} \alpha_{j} (1-\alpha_{i}) (1-\alpha_{j}))}{2n} - \frac{\log_{2} n}{n} + 2 H(\alpha_{i}) + \frac{(1-\alpha_{i}) a_{i,i}}{4(k-1)} \log_{2} \gamma. \nonumber
\end{equation}
Again, the first two terms approach $0$ as $n \rightarrow \infty$. We focus on the last two terms. Using the lower bound on $\alpha_{j}$ from \eqref{range_2} and the fact that $x \log x > (1-x) \log (1-x)$ for all $x \in \left(\frac{1}{2}, 1\right)$, we get:
\begin{align}
2 H(\alpha_{i}) + \frac{(1-\alpha_{i}) a_{i,i}}{4(k-1)} \log_{2} \gamma &\leqslant -4 (1-\alpha_{i}) \log_{2} (1-\alpha_{i}) + \frac{(1-\alpha_{i}) a_{i,i} \log_{2} \gamma}{4(k-1)} \nonumber\\
&\leqslant 8 (1-\alpha_{i}) \log_{2} a_{i,j} + \frac{(1-\alpha_{i}) a_{i,i} \log_{2} \gamma}{4(k-1)} \nonumber\\
&\leqslant 8 (1-\alpha_{i}) \left\{\log_{2} C + \log_{2} a_{i,i}\right\} + \frac{(1-\alpha_{i}) a_{i,i} \log_{2} \gamma}{4(k-1)} \nonumber\\
&= (1-\alpha_{i}) \left\{8 \log_{2} C + 8 \log_{2} a_{i,i} + \frac{a_{i,i} \log_{2} \gamma}{4(k-1)}\right\}. \nonumber
\end{align} 
As $a_{i,i}$ grows to $\infty$ much faster than $\log_{2} a_{i,i}$, and the coefficient of $a_{i,i}$ is a strictly negative constant whereas that of $\log_{2} a_{i,i}$ is a positive one, hence this is strictly negative for all $a_{i,i}$ sufficiently large.

Finally, we consider 
\begin{equation}\label{range_3}
\alpha_{j} \leqslant \frac{1}{a_{i,j}^{2}},
\end{equation}
and by Remarks~\ref{no_need_1} and \ref{no_need_2}, we need only consider $\alpha_{i} > \frac{1}{2}$ and $\eta_{j} = 0$. From \eqref{range_p_s}, for all $s = 1, \ldots, \left\lfloor\frac{\alpha_{i} n}{2}\right\rfloor$, we have 
\begin{equation}\label{p_{s}_upper_bound_subcase_1_subsub_3}
p_{s} \leqslant \frac{\alpha_{j} a_{i,j} n}{a_{i,j} \left(n - \frac{\alpha_{i} n}{2}\right)} < 2 \alpha_{j} \leqslant 2(1-\alpha_{i}), \nonumber
\end{equation}
so that 
\begin{equation}\label{deg_upper_bound_subcase_1_subsub_3}
\Prob\left[\deg_{C_{j}}(v_{s}) \in \left\{\eta_{j}, \eta_{j}+1\right\}\Big|\mathcal{F}_{s-1}\right] \leqslant \Prob\left[\bin\left(a_{i,j}, p_{s}\right) \geqslant 1\right] + O(n^{-1}) \leqslant a_{i,j} p_{s} < 2 (1-\alpha_{i}) a_{i,j}.
\end{equation}
By Remark~\ref{no_need_3}, we need only consider $G_{j} < \frac{\alpha_{i} n}{2}$, so that \eqref{deg_upper_bound_subcase_1_subsub_3} holds for all $s = 1, \dots, G_{j}$. By \eqref{G_{j}_lower_bound_subcase_1} and \eqref{range_3}, we get:
\begin{equation}\label{1-alpha_{i}_upper_bound_subcase_1_subsub_3}
\frac{1}{a_{i,j}} \geqslant \alpha_{j} a_{i,j} \geqslant \frac{G_{j}}{n} \geqslant \frac{(1-\alpha_{i}) a_{i,i}}{4(k-1)} \quad \implies \quad 1-\alpha_{i}  \leqslant \frac{4(k-1)}{a_{i,i} a_{i,j}}.
\end{equation}
For any fixed positive integer $r > 2$, for all $a_{i,i}$ sufficiently large, by \ref{intra_degrees_same} of Theorem~\ref{main_2}, we have
\begin{align}
a_{i,i} > C^{\frac{1}{r-2}} \left\{4(k-1)\right\}^{\frac{r-1}{r-2}} 2^{\frac{r}{r-2}} &\implies a_{i,i}^{r-1} \geqslant \left\{4(k-1)\right\}^{r-1} 2^{r} C a_{i,i} \geqslant \left\{4(k-1)\right\}^{r-1} 2^{r} a_{i,j}, \nonumber
\end{align}
so that by \eqref{1-alpha_{i}_upper_bound_subcase_1_subsub_3} we get 
\begin{equation}\label{log_upper_bound_subcase_1_subsub_3}
\log_{2}\left\{2 (1-\alpha_{i}) a_{i,j}\right\} \leqslant \frac{\log_{2}(1-\alpha_{i})}{r}.
\end{equation}
By \eqref{G_{j}_lower_bound_subcase_1} and \eqref{deg_upper_bound_subcase_1_subsub_3}, we get the following upper bound on $\frac{1}{n} \log_{2} \Prob[A_{j}]$:
\begin{equation}
-\frac{\log_{2}(\alpha_{i} \alpha_{j} (1-\alpha_{i}) (1-\alpha_{j}))}{2n} - \frac{\log_{2} n}{n} + 2 H(\alpha_{i}) + \frac{(1-\alpha_{i}) a_{i,i}}{4(k-1)} \log_{2} \left\{2(1-\alpha_{i})a_{i,j}\right\}. \nonumber
\end{equation}
Again, it suffices to focus on the last two terms, and by \eqref{log_upper_bound_subcase_1_subsub_3}, we get the following bound:
\begin{align}
2 H(\alpha_{i}) + \frac{(1-\alpha_{i}) a_{i,i}}{4(k-1)} \log_{2} \left\{2(1-\alpha_{i})a_{i,j}\right\} &\leqslant -4(1-\alpha_{i}) \log_{2}(1-\alpha_{i}) + \frac{a_{i,i}}{4(k-1)} (1-\alpha_{i}) \frac{\log_{2}(1-\alpha_{i})}{r} \nonumber\\
&= (1-\alpha_{i}) \log_{2}(1-\alpha_{i})\left\{-4 + \frac{a_{i,i}}{4(k-1)r}\right\}, \nonumber
\end{align}
which is strictly negative for all $a_{i,i}$ sufficiently large. This brings us to the end of \S\ref{subcase_1}.

\subsection{When $a_{1,1} < a_{i,i}$ and $\alpha_{i} > \frac{1}{2}$}\label{subcase_2} Note that, by Lemma~\ref{simpler}, this situation arises only when $B_{1} > a_{i,i} - a_{1,1}$, and we need only consider $\alpha_{i} \leqslant \frac{B_{1}}{a_{i,i} - a_{1,1} + B_{1}}$. From \eqref{choice_of_j} and the hypothesis of Theorem~\ref{main_2}, for all sufficiently large $a_{i,i}$, we get:
\begin{align}\label{G_{j}_lower_bound_subcase_2}
\frac{G_{j}}{n} & \geqslant \frac{1-\alpha_{i}}{k-1} \left(\frac{1}{2} - \frac{1}{\sqrt{a_{i,i}}}\right) a_{i,i} - \frac{\alpha_{i} (a_{i,i} - a_{1,1})}{k-1} \nonumber\\
&\geqslant \frac{1-\alpha_{i}}{k-1} \left(\frac{1}{2} - \frac{1}{\sqrt{a_{i,i}}}\right) a_{i,i} - \frac{(1-\alpha_{i}) (a_{i,i} - a_{1,1} + B_{1}) \alpha_{i}}{k-1} \nonumber\\
&= \frac{1-\alpha_{i}}{k-1}\left\{\frac{a_{i,i}}{2} - \sqrt{a_{i,i}} - B_{1}\right\} \nonumber\\
&\geqslant \frac{1-\alpha_{i}}{k-1}\left\{\left(\frac{1}{2} - \epsilon_{i,1}\right) a_{i,i} - B_{1}\right\} > \frac{(1-\alpha_{i}) \epsilon_{i,1} a_{i,i}}{k-1},
\end{align}
where the last inequality follows from \eqref{not_all_intra_degrees_same_cond}. 

We again split the analysis into three parts depending on the ranges of values of $\alpha_{j}$ as given in \eqref{range_1}, \eqref{range_2} and \eqref{range_3}, with $c$ satisfying the following condition:
\begin{equation}\label{cond_c_subcase_2}
\log_{2}c > \max\left\{16k + 1, \frac{2 C(k-1)}{\epsilon_{i,1} 4^{8k}}\right\}.
\end{equation}
We first consider the case of $\eta_{j} \neq 0$ and then the case of $\eta_{j} = 0$  in each of these ranges.

When we are in the regime of \eqref{range_1} and $\eta_{j} \neq 0$, we note that the bounds in \eqref{p_s_lower_bound} and \eqref{each_deg_probab_lower_bound_subcase_1} hold, and therefore the same analysis as before goes through. When $\alpha_{j} \leqslant \frac{c}{a_{i,j}}$ and $\eta_{j} \neq 0$, the bounds in \eqref{p_s_upper_subcase_1} and \eqref{deg_upper_bound_subcase_1_subsub_2} hold for all $s = 1, \ldots, \left\lfloor\frac{\alpha_{i} n}{2}\right\rfloor$. Now, from \eqref{G_{j}_lower_bound_subcase_2}, we have:
\begin{equation}\label{alpha_{i}_lower_bound_subcase_2_subsub_2}
c \geqslant \alpha_{j} a_{i,j} \geqslant \frac{G_{j}}{n} \geqslant \frac{(1-\alpha_{i}) \epsilon_{i,1} a_{i,i}}{k-1} \quad \implies \quad \alpha_{i} \geqslant 1 - \frac{c(k-1)}{\epsilon_{i,1} a_{i,i}}.
\end{equation}
Then $\frac{1}{n} \log_{2} \Prob[A_{j}]$ can be bounded above by
\begin{align}
& -\frac{\log_{2}\left\{\alpha_{i} \alpha_{j} (1-\alpha_{i}) (1-\alpha_{j})\right\}}{2n} - \frac{\log_{2} n}{n} + H(\alpha_{i}) + H(\alpha_{j}) + \frac{\alpha_{i}}{2} \log_{2} \gamma \nonumber
\end{align}
of which the first two terms approach $0$ as $n \rightarrow \infty$. The remaining terms can be bounded above by
\begin{equation}
2H\left(\frac{c (k-1)}{\epsilon_{i,1} a_{i,i}}\right) + \frac{\log_{2} \gamma}{4}, \nonumber
\end{equation}
of which the first term can be made arbitrarily small by choosing $a_{i,i}$ sufficiently large, and the last term is a strictly negative constant. Hence the above is strictly negative for all $a_{i,i}$ sufficiently large.

Now, we consider $\eta_{j} = 0$ for the various regimes of $\alpha_{j}$. By the same reasoning as Remark~\ref{no_need_3}, the only interesting case is where $G_{j} < \frac{\alpha_{i} n}{2}$. First, we consider the range of $\alpha_{j}$ as in \eqref{range_1}. The bound of \eqref{each_deg_probab_lower_bound_subcase_1_eta_0} holds for all $s = 1, \ldots, G_{j}$. By \eqref{G_{j}_lower_bound_subcase_2} and since $\alpha_{j} \leqslant 1-\alpha_{i}$, we get 
\begin{equation}
\frac{G_{j}}{n} \geqslant \frac{\alpha_{j} \epsilon_{i,1} a_{i,i}}{k-1} > \frac{c \epsilon_{i,1} a_{i,i}}{(k-1) a_{i,j}} > \frac{2 \cdot 4^{8k} \epsilon_{i,1}}{C(k-1)}
\end{equation}
by \eqref{G_{j}_lower_bound_subcase_2}, \eqref{cond_c_subcase_2} and \eqref{intra_degrees_same}. Thus an upper bound on $\Prob[A_{j}]$ is given by 
\begin{equation}
O\left\{\frac{2^{H(\alpha_{i})n + H(\alpha_{j})n}}{\sqrt{\alpha_{i} \alpha_{j} (1-\alpha_{i}) (1-\alpha_{j})} n} \left(\frac{1}{c}\right)^{G_{j}/2}\right\} \leqslant O\left\{\frac{2^{2n}}{\sqrt{\alpha_{i} \alpha_{j} (1-\alpha_{i}) (1-\alpha_{j})} n} \left(\frac{1}{c}\right)^{\frac{4^{8k} \epsilon_{i,1} n}{C(k-1)}}\right\} \nonumber
\end{equation}
which is $o(n^{-1})$ for $c$ as in \eqref{cond_c_subcase_2}. 

Next, we consider $\alpha_{j}$ as in \eqref{range_2}. Again, the bounds in \eqref{p_s_upper_subcase_1} and \eqref{deg_upper_bound_subcase_1_subsub_2} hold for all $s = 1, \ldots, G_{j}$. From \eqref{G_{j}_lower_bound_subcase_2}, we get the following upper bound on $\frac{1}{n} \log_{2} \Prob[A_{j}]$:
\begin{equation}
-\frac{\log_{2}\left\{\alpha_{i} \alpha_{j} (1-\alpha_{i}) (1-\alpha_{j})\right\}}{2n} - \frac{\log_{2} n}{n} + 2H(\alpha_{i}) + \frac{(1-\alpha_{i}) \epsilon_{i,1} a_{i,i}}{k-1} \log_{2} \gamma. \nonumber
\end{equation}
The sum of the last two terms can be bounded above by
\begin{align}
-4(1-\alpha_{i}) \log_{2} (1-\alpha_{i}) + \frac{(1-\alpha_{i}) \epsilon_{i,1} a_{i,i}}{k-1} \log_{2} \gamma &< 8 (1-\alpha_{i})  \log_{2} a_{i,j} + \frac{(1-\alpha_{i}) \epsilon_{i,1} a_{i,i}}{k-1} \log_{2} \gamma \nonumber\\
&\leqslant (1-\alpha_{i}) \left\{8\log_{2} C + 8\log_{2} a_{i,i} + \frac{\epsilon_{i,1} a_{i,i}}{k-1} \log_{2} \gamma\right\}, \nonumber
\end{align}
which is strictly negative for all $a_{i,i}$ sufficiently large. 

Finally, we consider $\alpha_{j}$ in the range given in \eqref{range_3}. The bound in \eqref{deg_upper_bound_subcase_1_subsub_3} holds for all $s = 1, \ldots, G_{j}$. From \eqref{G_{j}_lower_bound_subcase_2}, we have
\begin{equation}\label{1-alpha_{i}_upper_bound_subcase_2}
\frac{1}{a_{i,j}} \geqslant \alpha_{j} a_{i,j} \geqslant \frac{G_{j}}{n} \geqslant \frac{(1-\alpha_{i}) \epsilon_{i,1} a_{i,i}}{k-1} \quad \implies \quad \alpha_{i} \geqslant 1 - \frac{k-1}{\epsilon_{i,1} a_{i,i} a_{i,j}}. 
\end{equation}
For any fixed positive integer $r > 2$ and all $a_{i,i}$ sufficiently large,
\begin{equation}
a_{i,i} > C^{\frac{1}{r-2}} \left(\frac{k-1}{\epsilon_{i,1}}\right)^{\frac{r-1}{r-2}} 2^{\frac{r}{r-2}}, \nonumber
\end{equation}
and by the same reasoning as in \eqref{log_upper_bound_subcase_1_subsub_3}, using \eqref{1-alpha_{i}_upper_bound_subcase_2} we conclude that
\begin{equation}
\log_{2} \left\{2 (1-\alpha_{i}) a_{i,j}\right\} \leqslant \frac{\log_{2}(1-\alpha_{i})}{r}.\nonumber
\end{equation}
An upper bound on $\frac{1}{n} \log_{2} \Prob[A_{j}]$ is given by
\begin{equation}
-\frac{\log_{2}\left\{\alpha_{i} \alpha_{j} (1-\alpha_{i}) (1-\alpha_{j})\right\}}{2n} - \frac{\log_{2} n}{n} + 2H(\alpha_{i}) + \frac{(1-\alpha_{i}) \epsilon_{i,1} a_{i,i}}{k-1} \log_{2}\left\{2 (1-\alpha_{i}) a_{i,j}\right\} \nonumber
\end{equation}
of which the first two terms approach $0$ as $n \rightarrow \infty$, and the sum of the last two terms can be bounded by
\begin{align}
-4(1-\alpha_{i}) \log_{2}(1-\alpha_{i}) + \frac{(1-\alpha_{i}) \epsilon_{i,1} a_{i,i}}{k-1} \frac{\log_{2} (1-\alpha_{i})}{r}
= (1-\alpha_{i}) \log_{2} (1-\alpha_{i}) \left\{-4 + \frac{\epsilon_{i,1} a_{i,i}}{r (k-1)}\right\} \nonumber
\end{align}
which is strictly negative for all $a_{i,i}$ sufficiently large. This brings us to the end of \S\ref{subcase_2}.

\subsection{When $a_{i,i} > a_{1,1}$ and $\alpha_{i} \leqslant \frac{1}{2}$}\label{subcase_3} By \eqref{choice_of_j}, we have
\begin{align}\label{G_{j}_lower_bound_subcase_3}
\frac{G_{j}}{n} &\geqslant \frac{\alpha_{i}}{k-1}\left(\frac{1}{2} - \frac{1}{\sqrt{a_{i,i}}}\right) a_{i,i} - \frac{\alpha_{i} (a_{i,i} - a_{1,1})}{k-1} \nonumber\\
&\geqslant \frac{\alpha_{i}}{k-1}\left\{a_{1,1} - \frac{a_{i,i}}{2} - \sqrt{a_{i,i}}\right\} \geqslant \frac{\alpha_{i} \delta_{1,i} a_{i,i}}{k-1},
\end{align}
by \eqref{not_all_intra_degrees_same_cond'}. Note that, if $\alpha_{j} \leqslant \frac{c}{a_{i,j}}$ for any constant $c > 1$, then 
\begin{equation}
c \geqslant \alpha_{j} a_{i,j} \geqslant \frac{G_{j}}{n} \geqslant \frac{\alpha_{i} \delta_{1,i} a_{i,i}}{k-1} \quad \implies \quad \alpha_{i} \leqslant \frac{c (k-1)}{\delta_{1,i} a_{i,i}},\nonumber
\end{equation}
which is strictly less than $\frac{1}{k}$ for all $a_{i,i}$ sufficiently large, contradicting our choice of $i$. Hence we need only consider the range of \eqref{range_1} for values of $\alpha_{j}$.

When $\eta_{j} \neq 0$, the argument is the same as the corresponding case in \S\ref{subcase_1}. When $\eta_{j} = 0$, the bound in \eqref{each_deg_probab_lower_bound_subcase_1_eta_0} holds, and from \eqref{G_{j}_lower_bound_subcase_3} and $\alpha_{i} \geqslant \frac{1}{k}$, we get the following upper bound on $\frac{1}{n} \log_{2} \Prob[A_{j}]$:
\begin{align}
& -\frac{\log_{2}\left\{\alpha_{i} \alpha_{j} (1-\alpha_{i}) (1-\alpha_{j})\right\}}{2n} - \frac{\log_{2} n}{n} + H(\alpha_{i}) + H(\alpha_{j}) - \frac{\delta_{1,i} a_{i,i}}{2 k (k-1)} \log_{2} c, \nonumber
\end{align}
and as $H(\alpha_{i}) + H(\alpha_{j}) \leqslant 2$, the above expression is strictly negative for all $a_{i,i}$ sufficiently large.

\bibliography{rsbm}

\end{document}